\theoremstyle{plain}
\newtheorem{lem}{Lemma}[section]
\newtheorem{cor}[lem]{Corollary}
\newtheorem{prop}[lem]{Proposition}
\newtheorem{thm}[lem]{Theorem}
\newtheorem{intthm}{Theorem}
\theoremstyle{definition}
\newtheorem{defn}[lem]{Definition}
\newtheorem{ex}[lem]{Example}
\newtheorem{question}[lem]{Question}
\newtheorem{disc}[lem]{Remark}
\newtheorem{fact}[lem]{Fact}
\numberwithin{equation}{lem}
\renewenvironment{proof}{\vspace{1ex}\noindent{\textbf{Proof:}}\hspace{0.5em}}
{\hfill\qed\vspace{1ex}}
\newcommand{\D}{\mathcal{D}}
\newcommand{\cat}[1]{\mathcal{#1}}
\newcommand{\catd}{\cat{D}}
\newcommand{\catb}{\cat{B}}
\newcommand{\catbc}{\cat{B}_C}
\newcommand{\pd}{\operatorname{pd}}
\newcommand{\id}{\operatorname{id}}	
\newcommand{\fd}{\operatorname{fd}}
\newcommand{\cidim}{\mathrm{CI}\text{-}\!\dim}	
\newcommand{\cifd}{\mathrm{CI}\text{-}\!\fd}	
\newcommand{\gid}{\operatorname{Gid}}
\newcommand{\depth}{\operatorname{depth}}
\newcommand{\rhom}{\mathbf{R}\!\operatorname{Hom}}	
\newcommand{\lotimes}{\otimes^{\mathbf{L}}}
\newcommand{\HH}{\operatorname{H}}
\newcommand{\Hom}{\operatorname{Hom}}	
\newcommand{\spec}{\operatorname{Spec}}
\newcommand{\tor}{\operatorname{Tor}}
\newcommand{\im}{\operatorname{Im}}
\newcommand{\shift}{\mathsf{\Sigma}}
\newcommand{\cone}{\operatorname{Cone}}
\newcommand{\Ker}{\operatorname{Ker}}
\newcommand{\ideal}[1]{\mathfrak{#1}}
\newcommand{\m}{\ideal{m}}
\newcommand{\n}{\ideal{n}}
\newcommand{\p}{\ideal{p}}
\newcommand{\q}{\ideal{q}}
\newcommand{\supp}{\operatorname{Supp}}
\newcommand{\bbz}{\mathbb{Z}}
\newcommand{\bbn}{\mathbb{N}}
\newcommand{\from}{\leftarrow}
\newcommand{\xra}{\xrightarrow}
\newcommand{\xla}{\xleftarrow}
\newcommand{\vf}{\varphi}
\newcommand{\y}{\mathbf{y}}
\newcommand{\x}{\mathbf{x}}
\renewcommand{\geq}{\geqslant}
\renewcommand{\leq}{\leqslant}
\renewcommand{\ker}{\Ker}
\renewcommand{\hom}{\Hom}
\newcommand{\Ext}[4][R]{\operatorname{Ext}_{#1}^{#2}(#3,#4)}	
\newcommand{\Rhom}[3][R]{\mathbf{R}\!\operatorname{Hom}_{#1}(#2,#3)}	
\newcommand{\Otimes}[3][R]{#2\otimes_{#1}#3}
\renewcommand{\Hom}[3][R]{\operatorname{Hom}_{#1}(#2,#3)}
\newcommand{\width}{\operatorname{width}}
\renewcommand{\spec}[1][R]{\operatorname{Spec}(#1)}
\newcommand{\catdfb}{\catd^{\text{f}}_{\text{b}}}
\newcommand{\catdb}{\catd_{\text{b}}}
\renewcommand{\supp}{\operatorname{supp}}
\newcommand{\homciid}{\operatorname{CI}_{\operatorname{Hom}}\operatorname{-id}}
\begin{document}

\title{Complete Intersection Hom Injective Dimension}
\author{Sean K. Sather-Wagstaff}

\address{Department of Mathematical Sciences,
Clemson University,
O-110 Martin Hall, Box 340975, Clemson, S.C. 29634
USA}

\email{ssather@clemson.edu}

\urladdr{https://ssather.people.clemson.edu/}

\author{Jonathan P. Totushek}
\address{Mathematics and Computer Science Department,
University of Wisconsin-Superior,
Swenson Hall 3030,
Belknap and Catlin Ave,
P.O. Box 2000,
Superior, WI 54880,
USA}

\email{jtotushe@uwsuper.edu}

\urladdr{http://mcs-web.uwsuper.edu/jtotushe}

\dedicatory{}

\keywords{Bass conjecture, Bass numbers, complete intersection dimensions, injective dimension}
\subjclass[2010]{primary 13D05; secondary 13C11, 13D09, 13D22}

\begin{abstract}
We introduce and investigate a new injective version of the complete intersection dimension of Avramov, Gasharov, and Peeva. It is like the complete intersection injective dimension of Sahandi, Sharif, and Yassemi in that it is built using quasi-deformations. Ours is different, however, in that we use a Hom functor in place of a tensor product. We show that (a) this invariant characterizes the complete intersection property for local rings, (b) it fits between the classical injective dimension and the G-injective dimension of Enochs and Jenda, (c) it provides modules with Bass numbers that are bounded by polynomials, and (d) it improves a theorem of Peskine, Szpiro, and Roberts (Bass' conjecture).
\end{abstract}

\maketitle

\tableofcontents

\section{Introduction}
\label{180831:1}

\noindent
\textbf{Convention.}
In Sections~\ref{180831:1}--\ref{180807:4}, let $(R,\m,k)$ be a local ring.  

\

It is well known that $R$-modules with finite projective dimension are particularly nice, behaving like modules over regular local rings. 
Furthermore, modules of finite projective dimension have the ability to detect when the ring is regular according to the famous result of Auslander, Buchsbaum, and Serre~\cite{auslander:hdlr,serre:sldhdaedmn}.
The injective dimension has similar behavior.

The complete intersection dimension of Avramov, Gasharov and Peeva~\cite{avramov:cid} similarly identifies modules modules that behave like modules over over formal complete intersection rings. 
(The local ring $R$ is a \textit{formal complete intersection} if its $\m$-adic completion $\widehat{R}$ is of the form $\widehat{R} \cong Q/\mathfrak{a}$ where $Q$ is a regular local ring and $\mathfrak{a}$ is generated by a $Q$-regular sequence.)
For instance, the Betti numbers of such modules are bounded above by a polynomial, and these modules have the ability to detect when the ring is a formal complete intersection. 

The complete intersection dimension is defined in terms of \textit{quasi-deformations} of $R$, which are diagrams of local ring homomorphisms $R\xra{\varphi} R' \xla{\tau} Q$ such that $\varphi$ is flat, and $\tau$ is surjective with kernel generated by a $Q$-regular sequence. 
One then defines the \textit{complete intersection dimension} of a finitely generated $R$-module $M$ as 
\[
	\cidim_R(M) = \inf\{\pd_Q(M') - \pd_Q(R') \mid \text{ $R \to R' \from Q$ is a quasi-deformation} \}
\]
where $M' = R'\otimes_R M$.
Sahandi, Sharif, and Yassemi~\cite{sahandi:hfd} define the complete intersection flat dimension and complete intersection injective dimension for modules that are not necessarily finitely generated using $\fd_Q(M')$ and $\id_Q(M')$ in place of $\pd_Q(M')$. 
The complete intersection injective dimension is challenging to work with because injective dimension properties often do not behave well with respect to tensor products.

In this paper we introduce and investigate the following variation on the complete intersection injective dimension
	\[
            \homciid_R(M) = \inf\{\id_Q(J^{\varphi}) - \pd_Q(R') \mid \text{ $R\xra{\varphi} R' \from Q$ is a quasi-deformation} \}
	\]
where $J^{\varphi} = \hom{R'}{J}$ and $J$ is an injective resolution of $M$. 
We call this the \textit{complete intersection Hom injective dimension}.

Our first main result about this invariant is stated next. It shows how the complete intersection Hom injective dimension compares with the injective dimension and with Enochs and Jenda's~\cite{enochs:gipm} Gorenstein injective dimension. 

\begin{intthm}
\label{180711:1}
        For an $R$-module $M$, there are inequalities
	\[
		\operatorname{Gid}_R(M)\leq \homciid_R(M)\leq \id_R(M)
	\]
	with equality to the left of any finite value.
\end{intthm}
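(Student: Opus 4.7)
The two displayed inequalities together with their finite-case equalities can be handled largely independently. For the upper bound $\homciid_R(M) \le \id_R(M)$, I would evaluate the defining infimum on the trivial quasi-deformation $R \xrightarrow{\id} R \xleftarrow{\id} R$: here $\pd_R(R) = 0$ and $J^{\id} = \operatorname{Hom}_R(R,J) \cong J$, so the candidate bound equals $\id_R(M) - 0 = \id_R(M)$. When $\id_R(M) < \infty$, the classical Enochs--Jenda--Holm identity $\operatorname{Gid}_R(M) = \id_R(M)$, combined with the full chain $\operatorname{Gid}_R(M) \le \homciid_R(M) \le \id_R(M)$, instantly forces all three invariants to coincide, giving the left-hand equality in this case.

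For the lower bound $\operatorname{Gid}_R(M) \le \homciid_R(M)$, I would fix an arbitrary quasi-deformation $R \xrightarrow{\varphi} R' \xleftarrow{\tau} Q$ and factor the comparison through the complex $J^{\varphi} = \operatorname{Hom}_R(R', J)$ using two change-of-rings estimates. First, a change of rings along the surjection $\tau$: since $\ker \tau$ is a $Q$-regular sequence of length $c := \pd_Q(R')$, one obtains $\operatorname{Gid}_{R'}(J^{\varphi}) \le \id_Q(J^{\varphi}) - c$ via the standard regular-sequence comparison for the Gorenstein injective dimension of complexes. Second, a faithfully flat descent along $\varphi$: the map $\varphi$ is flat local, hence faithfully flat, and the adjunction between restriction of scalars and $\operatorname{Hom}_R(R', -)$ converts a Gorenstein injective resolution of $J^{\varphi}$ over $R'$ into a witness for $\operatorname{Gid}_R(M) \le \operatorname{Gid}_{R'}(J^{\varphi})$. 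Chaining the two bounds and passing to the infimum over quasi-deformations yields $\operatorname{Gid}_R(M) \le \homciid_R(M)$.

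For the remaining equality, suppose $\homciid_R(M) < \infty$, so that some quasi-deformation achieves (or approximates) the infimum with $\id_Q(J^{\varphi}) < \infty$. Under this finiteness, the change-of-rings inequality $\operatorname{Gid}_{R'}(J^{\varphi}) \le \id_Q(J^{\varphi}) - c$ sharpens to equality by the corresponding identity for Gorenstein injective dimension along a regular-sequence quotient, and the faithfully flat descent upgrades to $\operatorname{Gid}_R(M) = \operatorname{Gid}_{R'}(J^{\varphi})$. Taking the infimum then produces $\operatorname{Gid}_R(M) = \homciid_R(M)$. I expect the technical heart of the argument to lie in this faithfully flat descent in its equality form: establishing that $\operatorname{Gid}_R(M)$ is both bounded above by and detected by $\operatorname{Gid}_{R'}(\operatorname{Hom}_R(R', M))$ is a derived-category refinement of the classical flat base-change statements for injective dimension, and is where the bulk of the work should reside; by contrast, the regular-sequence change-of-rings step is a standard application of the Gorenstein homological toolkit.
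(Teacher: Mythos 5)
Your overall skeleton is right---evaluate the defining infimum at the trivial quasi-deformation for the upper bound, squeeze via the classical $\gid=\id$ identity when $\id_R(M)$ is finite, and for the lower bound compare through $J^{\varphi}$ via a two-step change of rings along $\tau$ and $\varphi$---but the first of your two change-of-rings estimates conceals the technical heart of the proof rather than dispatching it. There is no off-the-shelf ``regular-sequence comparison'' that delivers $\gid_{R'}(J^{\varphi}) \le \id_Q(J^{\varphi}) - c$ directly. The passage from $\id_Q(J^{\varphi}) < \infty$ to finiteness of the Gorenstein injective dimension over $R'$ is precisely where the paper brings in the Avramov--Foxby Bass-class machinery: it first reduces to the case of complete $Q$ (by a separate, nontrivial change-of-rings step over completions), uses that $\id_Q(J^{\varphi}) < \infty$ places $J^{\varphi}$ in the Bass class $\mathcal{B}_{D^Q}(Q)$ of a dualizing complex, transfers this membership along the deformation $\tau$ to conclude $J^{\varphi} \in \mathcal{B}_{D^{R'}}(R')$, and then invokes the Christensen--Frankild--Holm characterization to obtain $\gid_{R'}(J^{\varphi}) < \infty$. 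The quantitative identity $\gid_{R'}(J^{\varphi}) = \id_Q(J^{\varphi}) - c$ only emerges afterward from Chouinard-type formulas for both invariants---it is a consequence of the finiteness, not a tool for proving it.

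Conversely, the step you flag as ``the technical heart''---faithfully flat descent along $\varphi$---is comparatively tame: it is exactly the transfer theorem $\gid_R(M) = \gid_{R'}(\rhom_R(R',M))$ of Christensen--Sather-Wagstaff, which the paper invokes as a black box. So you have the right decomposition but have misplaced the difficulty, and as written your appeal to a ``standard'' regular-sequence comparison along $\tau$ is a genuine gap. Two further remarks: the paper establishes $\homciid_R(M) = \id_R(M)$ when $\id_R(M) < \infty$ directly via Chouinard (its Claim~1) rather than by your squeeze, and its final chain of equalities simultaneously shows that $\id_Q(J^{\varphi}) - \pd_Q(R')$ is independent of the chosen quasi-deformation once it is finite---a strictly stronger conclusion than what your infimum-taking argument would yield, and one the paper needs for its part~(a).
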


This is Theorem~\ref{160817:4}\eqref{160817:4b} below.
Theorem~\ref{160817:4} also includes a version of the Chouinard formula for the complete intersection Hom injective dimension. 
This yields a Bass formula when $M$ is finitely generated; see Corollary~\ref{180724:1}. 

Our next main result is contained in Theorem~\ref{160922.3}. 
It shows that the complete intersection Hom injective dimension characterizes formal complete intersection rings like the injective dimension characterizers regular rings.
The subsequent result (part of Theorem~\ref{170907:2}) gives a polynomial bound for the Bass numbers of a finitely generated module with finite complete intersection Hom injective dimension; in the language of Avramov, Iyengar, and Miller~\cite{avramov:holh} it says that such modules have finite injective complexity.

\begin{intthm}
\label{180711:2}
        The following conditions are equivalent.
        \begin{enumerate}[\rm (i)]
            \item $\homciid_R(k)<\infty$. 
            \item $R$ is a formal complete intersection.
            \item For every $R$-module $M$, we have $\homciid_R(M)<\infty$. 
        \end{enumerate}
\end{intthm}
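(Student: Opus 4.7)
The equivalence will be established via the cycle (iii) $\Rightarrow$ (i) $\Rightarrow$ (ii) $\Rightarrow$ (iii). The implication (iii) $\Rightarrow$ (i) is trivial since $k$ is an $R$-module.

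For (ii) $\Rightarrow$ (iii), I plan to exploit the tautological quasi-deformation coming from the formal complete intersection structure. Writing $\comp{R} \cong Q/\fa$ with $Q$ regular local and $\fa$ generated by a $Q$-regular sequence, the diagram $R \to \comp{R} \xla{} Q$ is a quasi-deformation. For any $R$-module $M$ with injective resolution $J$, the complex $J^{\varphi} = \Hom{\comp{R}}{J}$ is a complex of injective $\comp{R}$-modules, which we view over $Q$. Since $Q$ is regular, it has finite global dimension $\dim Q$, so every complex over $Q$ has injective dimension bounded in terms of $\dim Q$. Together with $\pd_Q(\comp{R})=\operatorname{length}(\fa)<\infty$, this yields $\homciid_R(M)<\infty$.

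The heart of the proof is (i) $\Rightarrow$ (ii). Fix a quasi-deformation $R \xra{\varphi} R' \xla{\tau} Q$ realizing $\homciid_R(k)<\infty$, so that $\id_Q(J^{\varphi})<\infty$ and $\pd_Q(R')<\infty$, where $J$ is an injective resolution of $k$. The strategy is to show that $Q$ must be regular. Once this is established, $R' = Q/\ker(\tau)$ is a complete intersection in the strict sense, hence so is its completion $\comp{R'}$; then standard flat descent through the flat local map $R \to R'$ (whose induced map on completions $\comp{R}\to\comp{R'}$ remains flat local) transfers the formal complete intersection property from $R'$ back to $R$, giving (ii). The route from $R$ to (iii) is a direct consequence of (ii) $\Rightarrow$ (iii) applied after this descent.

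The regularity of $Q$ is the core obstacle. The idea is to detect regularity of $Q$ through the residue field $k_Q$, using the compatibilities of the quasi-deformation. The finite $\id_Q(J^{\varphi})$ together with $\pd_Q(R')<\infty$ should allow one to compute
\[
\rhom_Q(k_Q, J^{\varphi}) \simeq \rhom_R\bigl(R'\lotimes_Q k_Q,\,J\bigr)
\]
via Hom-tensor adjunction, recognizing $R'\lotimes_Q k_Q$ as a Koszul-type complex on a regular sequence in $Q$. Careful bookkeeping of cohomological degrees should force $\operatorname{pd}_Q(k_Q)<\infty$, hence $Q$ regular. The hardest technical point will be ensuring that this cohomological transfer truly yields finiteness of the projective dimension of $k_Q$ rather than merely of its injective dimension (which would only guarantee $Q$ is Gorenstein, insufficient for the conclusion); this is where the specific structure of quasi-deformations — namely that $\pd_Q(R')$ equals the length of the regular sequence $\ker(\tau)$, so $R'\lotimes_Q k_Q$ is concentrated in a tight range of degrees with $k_Q$-vector space cohomology — must be exploited.
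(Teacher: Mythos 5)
Your implications (iii) $\Rightarrow$ (i) and (ii) $\Rightarrow$ (iii) match the paper (for the latter, the paper invokes the quasi-deformation $R\to\widehat R\leftarrow Q$ with $Q$ regular, together with Fact~\ref{180505:1} to get boundedness of $Y^\varphi$, exactly as you sketch). The problem is (i) $\Rightarrow$ (ii).

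Your plan there is to show that any quasi-deformation $R\xra{\varphi}R'\xleftarrow{\tau}Q$ detecting $\homciid_R(k)<\infty$ must have $Q$ regular, and then descend the complete intersection property along $R\to R'$. The core claim is false. Take $R=k$, $R'=k[[y]]/(y^2)$, and $Q=k[[x,y]]/(y^2)$ with $\tau$ killing $x$. This is a legitimate quasi-deformation: $\varphi$ is flat, $\tau$ is surjective with $Q$-regular kernel $(x)$, and the closed fibre $R'$ is artinian Gorenstein. Here $k^\varphi=\Hom_k(R',k)\cong R'\cong Q/(x)$, and since $Q$ is a one-dimensional Gorenstein ring and $\pd_Q(Q/(x))=1$, we get $\id_Q(k^\varphi)=\depth Q=1<\infty$, yet $Q$ is not regular. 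So even after the reduction of Theorem~\ref{180522:1} (which in this example changes nothing, as $R',Q$ are already complete with artinian Gorenstein fibre), the quasi-deformation witnessing finiteness need not have $Q$ regular, and the ``careful bookkeeping'' cannot be made to force $\pd_Q(k_Q)<\infty$. A symptom of the difficulty is visible in your adjunction step: $\rhom_Q(k_Q,k^\varphi)\simeq\rhom_R(R'\lotimes_Q k_Q,k)$ is correct, and finiteness of $\id_Q(k^\varphi)$ does bound this complex, but the information it yields is vanishing of $\Ext^{\gg0}_R(k_Q,k)$ over $R$ — a statement about $R$, not about $\pd_Q(k_Q)$. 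There is no passage from this to regularity of $Q$.

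The paper's route is structurally different and avoids this trap entirely. It uses Theorem~\ref{170915:4}\eqref{170915:4b} (Matlis duality exchanging $\homciid$ and $\cifd$ for complexes with Matlis-reflexive homology) together with $k^\vee\cong k$ to get
\[
\cidim_R(k)=\cifd_R(k)=\cifd_R(k^\vee)=\homciid_R(k)<\infty,
\]
and then quotes the Avramov--Gasharov--Peeva theorem that $\cidim_R(k)<\infty$ characterizes formal complete intersections. The heavy lifting — forcing regularity of a suitable $Q$ from $\pd_Q(R'\otimes_R k)<\infty$ — is thereby delegated to AGP's proof, where the Koszul/depth argument in the projective setting actually goes through. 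If you want to argue directly rather than dualize, you would essentially have to reprove that result in the Hom-injective setting, and your sketch does not do so.
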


\begin{intthm}
	\label{180711:3}
	Let $M$ be a finitely generated $R$-module with $\homciid_R(M)<\infty$. 
	Then the sequence of Bass numbers $\{\mu^i_R(M)\mid i\geq 0\}$ is bounded above by a polynomial in~$i$.
\end{intthm}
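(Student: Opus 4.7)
The plan is to transfer the computation to the ring $R'$ of a well-chosen quasi-deformation and then apply Gulliksen's polynomial-growth theorem for the surjection $\tau\colon Q \onto R'$.

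First, choose a quasi-deformation $R \xra{\varphi} R' \xla{\tau} Q$ realizing the finiteness of $\homciid_R(M)$, so $\id_Q(J^\varphi) < \infty$, where $M \xra{\simeq} J$ is an injective $R$-resolution and $J^\varphi = \Hom[R]{R'}{J}$. Let $c = \pd_Q(R')$, the length of the $Q$-regular sequence $\mathbf{x}$ generating $\ker\tau$. By a standard gonflement argument one may assume the residue-field extension $k \to k' := R'/\m'$ is trivial; this does not disturb $\id_Q(J^\varphi) < \infty$.

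Second, the restriction--coinduction adjunction supplies natural quasi-isomorphisms
\[
\rhom_{R'}(k, J^\varphi) \simeq \rhom_{R'}(k, \rhom_R(R', M)) \simeq \rhom_R(k, M).
\]
Taking cohomology yields $\bass{i}{M} = \dim_k \Ext[R']{i}{k}{J^\varphi}$ for every $i \geq 0$, so the task reduces to bounding $\dim_k \Ext[R']{i}{k}{J^\varphi}$ by a polynomial in $i$.

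Third, since the kernel of $\tau\colon Q \onto R' = Q/(\mathbf{x})$ is generated by a $Q$-regular sequence of length $c$, and since $\id_Q(J^\varphi) < \infty$ forces $\Ext[Q]{i}{k}{J^\varphi} = 0$ for $i \gg 0$, Gulliksen's theorem on cohomology operators (extended to bounded-cohomology complexes) endows $\Ext[R']{*}{k}{J^\varphi}$ with the structure of a finitely generated graded module over the polynomial ring $k[\chi_1, \ldots, \chi_c]$ with $\deg \chi_j = 2$. A standard Hilbert-series analysis then bounds each $\dim_k \Ext[R']{i}{k}{J^\varphi}$ by a polynomial in $i$ of degree at most $c - 1$, which together with the identification above completes the proof.

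The main obstacle is invoking Gulliksen's finite-generation theorem for the complex $J^\varphi$ rather than a module. Because $J^\varphi$ has bounded cohomology (thanks to $\id_Q(J^\varphi) < \infty$), one can either reduce to the module case by hard-truncating and applying a (co)syzygy argument, or cite a derived-category formulation in the spirit of Avramov--Iyengar. The residue-field reduction to $k' = k$ likewise requires some care to preserve the quasi-deformation, but is standard.
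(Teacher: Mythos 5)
Your approach is genuinely different from the paper's, and it has a gap that you acknowledge but do not resolve. The paper does not work with cohomology operators at all: it Matlis-dualizes, invokes Theorem~\ref{170915:4}\eqref{170915:4a} to get $\cifd_R(M^{\vee})<\infty$, identifies $\mu^i_R(M)=\beta^R_i(M^{\vee})$, and then quotes the Avramov--Gasharov--Peeva polynomial bound for Betti numbers of a complex of finite $\cifd$. This reduction completely sidesteps the two issues your argument runs into.

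The first issue is the claimed reduction to $k=k'$. Gonflement enlarges residue fields; it does not shrink them, and there is no standard way to replace a quasi-deformation $R\to R'\from Q$ witnessing $\id_Q(J^{\varphi})<\infty$ with one having trivial residue field extension. Without $k=k'$ your key identity $\mu^i_R(M)=\dim_k\Ext[R']{i}{k}{J^{\varphi}}$ is not the right statement: the correct object is $\Ext[R']{i}{k'}{J^{\varphi}}$, and relating its $k'$-dimension to $\mu^i_R(M)$ requires control on $[k':k]$. Even after the reduction of Theorem~\ref{180522:1} to Gorenstein artinian closed fibre (which gives $\Hom[R]{R'}{E_R(k)}\cong E_{R'}(k')$ by Foxby and makes $\Hom[R]{R'}{-}$ of a minimal injective resolution again minimal), the extension $k\to k'$ can be infinite, in which case the $k'$-dimensions blow up. The second issue, which you flag honestly, is that Gulliksen's finite-generation theorem is stated for modules and some work is needed to apply it to the complex $J^{\varphi}$; the passage is plausible but not carried out. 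By contrast, the paper's Matlis-duality route outsources both problems to the already-established theory of $\cifd$, where the Betti-number analogue of everything you need is known. If you want to salvage your argument, you should drop the gonflement claim, work with $k'$ throughout, and prove directly that $\mu^i_{R'}(J^{\varphi})=\mu^i_R(M)$ under the Gorenstein-artinian-fibre hypothesis with $[k':k]<\infty$, together with a justification that such a quasi-deformation exists; or simply follow the paper and dualize.
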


Our final main result improves Bass' conjecture~\cite{bass:ugr} as proved by Peskine, Szpiro, and Roberts~\cite{peskine:dpfcl,roberts:ti,roberts:it}; see Theorem~\ref{170915:4.2}.
It is an open question whether one can replace complete intersection Hom injective dimension with Gorenstein injective dimension in this result~\cite[Question~6]{christensen:btrmab}; see Takahashi and Yassemi~\cite{MR2231892,MR2287566} for progress on this question. 

\begin{intthm}
\label{180711:4}
	Assume that $R$ has non-zero finitely generated $R$-module $M$ such that $\homciid_R(M)<\infty$.
	Then $R$ is Cohen-Macaulay.
\end{intthm}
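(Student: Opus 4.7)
The plan is to lift the hypothesis along a quasi-deformation and apply the classical Bass conjecture (the Peskine--Szpiro--Roberts theorem) over the ambient ring $Q$, then descend the Cohen--Macaulay conclusion back to $R$. The overall target is to prove $R$ Cohen--Macaulay by showing $Q$ Cohen--Macaulay in a suitably chosen quasi-deformation.

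By the finiteness $\homciid_R(M)<\infty$, I fix a quasi-deformation $R\xra{\varphi} R'\xla{\tau} Q$ together with an injective resolution $J$ of $M$ over $R$ for which $\id_Q(J^\varphi)<\infty$, where $J^\varphi=\Hom{R'}{J}$ is regarded as a complex of $Q$-modules via $\tau$. Because $\varphi$ is faithfully flat and $M\neq 0$, the complex $J^\varphi$ is not acyclic; in particular, the degree zero cohomology $\Hom{R'}{M}$ is a non-zero $R'$-module. I would then refine the quasi-deformation so that some non-zero cohomology module of $J^\varphi$ is finitely generated over $R'$, and hence over $Q$ via the surjection $\tau$ (since $R'=Q/\fa$ is module-finite over $Q$). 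Standard tools here are a Cohen factorization of $\varphi$ and, if necessary, a passage to completions, which let one express $\Hom{R'}{M}$ in terms of a tensor product with a finitely generated module.

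Once a non-zero bounded complex of $Q$-modules with finitely generated cohomology and finite $Q$-injective dimension is in hand, the Peskine--Szpiro--Roberts theorem forces $Q$ to be Cohen--Macaulay. The descent from there is routine: since $\ker\tau$ is generated by a $Q$-regular sequence, $R'=Q/\fa$ inherits Cohen--Macaulayness from $Q$; and since $\varphi\colon R\to R'$ is a faithfully flat local homomorphism with $R'$ Cohen--Macaulay, the standard additivity formulas $\depth R'=\depth R+\depth(R'/\m R')$ and $\dim R'=\dim R+\dim(R'/\m R')$, together with the universal bound $\depth\leq\dim$, force both $R$ and the closed fiber $R'/\m R'$ to be Cohen--Macaulay.

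The main obstacle I anticipate is the step of producing finitely generated cohomology from $J^\varphi$: even when $M$ is finitely generated over $R$, the $R'$-module $\Hom{R'}{M}$ need not be finitely generated over $R'$, which is precisely the asymmetry between the Hom-based and tensor-based variants of the complete intersection injective dimension. Navigating this likely requires a careful choice of quasi-deformation, guided by the Chouinard and Bass formulas for $\homciid$ established in Theorem~\ref{160817:4} and Corollary~\ref{180724:1}, so that $\depth R$ can be matched with invariants computed over $Q$ and the reduction to a finitely generated setup becomes tractable.
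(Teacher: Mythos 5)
Your overall strategy---produce a non-zero finitely generated $Q$-module of finite injective dimension in a suitable quasi-deformation, apply the Peskine--Szpiro--Roberts theorem to $Q$, and descend Cohen--Macaulayness back to $R$---is the same as the paper's, and your descent argument is fine. However, the crucial step, which you correctly flag as ``the main obstacle,'' is left as an aspiration rather than an argument. There is no mechanism for ``refining the quasi-deformation so that some non-zero cohomology module of $J^{\varphi}$ is finitely generated over $R'$'': when $\varphi\colon R\to R'$ is a faithfully flat local homomorphism that is not module-finite, the $R'$-modules $\ext^i_R(R',M)$ have no reason to be finitely generated, and neither Cohen factorizations nor passage to completions changes this. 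Also, the assertion that $\Hom[R]{R'}{M}$ is non-zero does not follow merely from $J^{\varphi}$ being non-acyclic; faithful flatness only guarantees that \emph{some} $\ext^i_R(R',M)$ is non-zero, not the one in degree zero.

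The paper sidesteps this difficulty entirely via Matlis duality, and that is the idea your proposal is missing. By Theorem~\ref{170915:4}\eqref{170915:4a}, the hypothesis $\homciid_R(M)<\infty$ gives $\cifd_R(M^{\vee})<\infty$. The structure theorem for $\cifd$ (Theorem~F of~\cite{sather:cidfc}) then produces a quasi-deformation $R\xra{\varphi}R'\from Q$ with $R'$ and $Q$ complete, closed fibre artinian and Gorenstein, and $\fd_Q\bigl((M^{\vee})'\bigr)<\infty$. Under these hypotheses one has $R'\otimes_R E_R(k)\simeq E_{R'}(l)$, hence $(M^{\vee})'\simeq\Rhom[R']{M'}{E_{R'}(l)}$ where $M'=R'\otimes_R M$. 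Since $R'$ is complete and $M'$ is finitely generated over $R'$, the module $M'$ is Matlis reflexive; dualizing again and using~\cite[Theorem~4.1(I)]{avramov:hdouc} converts the finite flat dimension of $(M^{\vee})'$ over $Q$ into $\id_Q(M')<\infty$. This produces exactly the non-zero finitely generated $Q$-module of finite injective dimension that your plan needs, after which your application of Bass' conjecture to $Q$ and the descent to $R$ proceed as you describe.
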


In addition to the contents summarized above, Section~\ref{180807:3} contains stability results needed for Section~\ref{180807:4}, and Appendix~\ref{180725:1} contains non-local technical results for use in Section~\ref{180807:2}.
Lastly, while this introduction is written in terms of modules, the body of the article is written in terms of complexes; Section~\ref{180807:1} contains relevant background information on the derived category.

\section{Background}
\label{180807:1}

Recall that $R$ is a local ring. We mostly work in the derived category $\D(R)$ of complexes of $R$-modules, indexed homologically (see e.g. \cite{gelfand:moha,hartshorne:rad}). Isomorphisms in $\D(R)$ are identified by the symbol $\simeq$. We use the same symbol to identify quasiisomorphisms between complexes, that is, chain maps that induce isomorphisms on the level of homology. 

Let $X,Y\in\D(R)$. Then $\inf(X)$ and $\sup(X)$ denote the infimum and supremum, respectively, of the set $\{i\in \bbz\mid \operatorname{H}_i(X) = 0\}$. Let $X\lotimes_R Y$ and $\rhom_R(X,Y)$ denote the left-derived tensor product and right-derived homomorphism complexes, respectively.
The \textit{depth} and \textit{width} of $X$ are defined by Foxby and Yassemi~\cite{foxby:bcfm,yassemi:wcm}~as 
\begin{align*}
	\depth_R(X) &= -\sup(\rhom_R(k,X))\\
	\operatorname{width}_R(X) &= \inf(k\lotimes_R X).
\end{align*}
	The $i^{\text{th}}$ \textit{Betti number} and the $i^{\text{th}}$ \textit{Bass number} of $X$ are, respectively,
	\begin{align*}
		\beta^R_i(X) &= \dim_k(\HH^i(\rhom_R(X,k)) = \dim_k(\HH_i(X \lotimes_R k)) \\
		\mu_R^i(X) &= \dim_k(\HH^i(\rhom_R(k,X)).
	\end{align*}
Let $E_R(k)$ denote the injective hull of $k$, and set $X^{\vee} = \rhom_R(X,E_R(k))$.

Let $\D_+(R)$ and $\D_-(R)$ denote the full subcategories of $\D(R)$ consisting of all complexes $X$ such that $\HH_i(X) = 0$ for $i \ll 0$ and $i \gg 0$, respectively. Set $\D_\text{b}(R) = \D_+(R) \cap \D_-(R)$, and let $\D^\text{f}_\text{b}(R)$ denote the full subcategory consisting of all complexes $X$ such that $\bigoplus_{i\in\bbz} \HH_i(X)$ is finitely generated.

	If $X\in \D_-(R)$, then the \textit{injective dimension} of $X$ is
	\[
		\id_R(X) = \inf\left\{n\in \bbz \left| 
		\begin{array}{l}
			X\xra{\simeq} I \text{ where } I \text{ is a complex of injective}\\
			R\text{-modules such that } I_i = 0 \text{ for all } i<-n
		\end{array}\right.
		\right\}.
	\]
	The \textit{flat dimension} ($\fd$) and \textit{projective dimension} ($\pd$) are defined similarly for $X\in \D_+(R)$.
	If $\id_R(X)<\infty$, then $X\in \D_{\text{b}}(R)$ and similarly for $\fd$ and $\pd$.

	For ease of reference we single out the following.

\begin{fact}
    \label{180505:1}
    	Let $S$ be a commutative noetherian ring (not necessarily local) with $\dim(S) < \infty$. If $F$ is a flat $R$-module, then $\pd_S(F)\leq \dim(S)<\infty$ by Raynaud and Gruson \cite[Seconde partie, Th\'{e}or\`{e}me (3.2.6)]{raynaud:cpptpm} and Jensen \cite[Proposition 6]{jensen:vl}.
	In particular if $X\in \D_\text{b}(S)$, then $\rhom_S(F,X)\in \D_\text{b}(S)$.
\end{fact}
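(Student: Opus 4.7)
The plan is to handle the two assertions of the fact separately. The first --- the bound $\pd_S(F) \leq \dim(S)$ for a flat $S$-module $F$ --- is precisely the content of the cited theorems of Raynaud--Gruson and Jensen, so no additional argument is needed beyond the reference. (I read the ``$R$-module'' in the statement as ``$S$-module,'' since $R$ plays no role at this level of generality.)

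For the ``in particular'' clause, I would set $d := \pd_S(F) \leq \dim(S)$ and choose a projective resolution $P \xra{\simeq} F$ with $P_i = 0$ for $i \notin [0,d]$. Then by definition $\rhom_S(F,X) \simeq \Hom[S]{P}{X}$ in $\D(S)$. Since $X \in \D_\text{b}(S)$, one may replace $X$ up to quasi-isomorphism by a bounded complex of $S$-modules concentrated in degrees $[\inf(X),\sup(X)]$. The Hom bicomplex $\Hom[S]{P_\bullet}{X_\bullet}$ then has only finitely many nonzero entries, so its totalization is an honestly bounded complex, whose homology is automatically bounded; hence $\rhom_S(F,X) \in \D_\text{b}(S)$.

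I do not anticipate a serious obstacle here; the entire content is repackaging the finiteness of $\pd_S(F)$ --- which is the nontrivial input, supplied by the cited theorems --- into the statement that $\rhom_S(F,-)$ preserves $\D_\text{b}(S)$. The only mild care needed is to keep indexing conventions consistent when truncating $X$ to a bounded complex of modules; if one prefers to avoid that replacement, the same conclusion follows from a direct hyperext estimate using that $P$ is bounded of length $d$ and $\HH(X)$ has bounded support, which forces $\HH_n(\rhom_S(F,X))$ to vanish outside a range controlled by $d$, $\inf(X)$, and $\sup(X)$.
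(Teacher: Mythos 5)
Your proposal is correct and matches the intended argument: the paper states this Fact with only the citations and leaves the ``in particular'' clause as the routine observation you spell out (a bounded projective resolution of $F$ of length $\pd_S(F)\leq\dim(S)$ Hom'd against a bounded representative of $X$ gives a bounded complex). You are also right to read ``flat $R$-module'' as a typo for ``flat $S$-module''; every application of this Fact in the paper (e.g.\ in Propositions~\ref{prop171022d}, \ref{180621:1}, and~\ref{prop171022a}) takes $F$ flat over the ring there denoted $S$.
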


	If $X\in \D_{+}(R)$, then the \textit{complete intersection flat dimension} of $X$ as defined by Sahandi, Sharif, Yassemi, and Sather-Wagstaff~\cite{sahandi:hfd,sather:cidfc} is
	\[
		\cifd_R(X) = \inf \{\fd_Q(X') - \pd_Q(R') \mid R \to R' \from Q \text{ is a quasi-deformation}\}
	\]
	where $X' = R'\lotimes_R X$.
	If $X\in \D^{\text{f}}_{\text{b}}(R)$, then this is the complete intersection dimension $\cidim_R(X)$ of Avramov, Gasharav, Peeva, and Sather-Wagstaff~\cite{avramov:cid,sather:cidc}.

	Following Enochs and Jenda~\cite{enochs:gipm}, we say that an $R$-module $N$ is \textit{Gorenstein injective} if there is an exact sequence of injective modules
	\[
		E = \cdots \to E_{2} \xra{\partial_{2}} E_{1} \xra{\partial_{1}} E_0 \xra{\partial_0} E_{-1} \xra{\partial_{-1}} E_{-2} \to \cdots
	\]
	such that $N \cong \ker(\partial_0)$ and such that for any injective module $I$, the complex $\hom{I}{E}$ is exact.

	If $X\in\D_-(R)$, then the \textit{Gorenstein injective dimension} of $X$ is 
	\[
		\gid_R(X) = \inf\left\{n\in \bbz \left| 
		\begin{array}{l}
			X\xra{\simeq} I \text{ where } I \text{ is a complex of Gorenstein injective}\\
			R\text{-modules such that } I_i = 0 \text{ for all } i<-n
		\end{array}\right.
		\right\}.
	\]

	The following concepts are due to Avramov, Christensen, and Foxby \cite{avramov:rhafgd, christensen:scatac, foxby:gmarm}.
	An $R$-complex $C\in \D^{\text{f}}_{\text{b}}(R)$ is \textit{semidualizing} if the natural homothety morphism $\chi^R_C : R \to \rhom_R(C,C)$ is an isomorphism in $\D(R)$.
	The \textit{Bass class} with respect to $C$ is the class $\catbc(R)$ of all $X\in \D_{\text{b}}(R)$ such that $\rhom_R(C,X) \in \D_{\text{b}}(R)$ and the evaluation morphism $\xi^{C}_X:C\lotimes_R \rhom_R(C,X) \to X$ is an isomorphism in $\D(R)$.
	It is straightforward to show that $\catbc(R)$ satisfies the following \textit{two-of-three} condition: for each distinguished triangle $X\to Y \to Z\to$ in $\catd(R)$, if two of the three complexes are in $\catbc(R)$, then so is the third.
	We say that $D \in \catdfb(R)$ is \textit{dualizing} for $R$ if it is semidualizing for $R$ and $\id_R(D) < \infty$.
	Recall that if $R$ is a homorphic image of a Gorenstein ring, e.g., if $R$ is complete, then $R$ has a dualizing complex by~\cite[V.10]{hartshorne:rad}.

	A full subcategory $T \subseteq \D(R)$ is \textit{thick} if it is closed under suspensions and summands, and it satisfies the two-of-three condition. 
	Given any collection $S$ of $R$-complexes, the \textit{thick subcategory generated by} $S$ is the smallest thick subcategory of $\D(R)$ containing $S$.

\begin{disc}\label{disc171022a}
Let $R'$ be an $R$-algebra and
let $T_1 = \operatorname{Add}(R')$ be the class of $R$-module summands of
arbitrary direct sums of copies of $R'$.
Inductively, for $n\geq 2$ let $T_n$ consist of the objects $Z\in\catd(R)$ such that
$Z$ is a retract of an object $Y\in\catd(R)$ such that there is an exact triangle 
$Y'\to Y\to Y''\to$
in $\catd(R)$ with $Y'\in T_1$ and $Y''\in T_{n-1}$.
From~\cite[Proposition~2.3]{mathew} we have $T=\cup_{i=1}^\infty T_n$, and furthermore
if $A'\to A\to A''\to$ is an exact triangle in $\catd(R)$ such that $X'\in T_a$ and $X''\in T_b$, then $X\in T_{a+b}$.
\end{disc}

\section{Homological Dimensions of Complexes}
\label{180807:2}

Recall that $R$ is a local ring. In this section we prove Theorem \ref{180711:1} from the introduction in addition to other results, e.g., a version of the Chouinard formula and Bass formula. 
We begin by defining the complete intersection Hom injective dimension for complexes; note that it recovers the definition for modules given in the introduction.

\begin{defn}
\label{180810:1}
	Let $X\in \D_{-}(R)$. The \textit{complete intersection Hom injective dimension} of $X$ is 
	\[
		\homciid_R(X) = \inf\{\id_Q(X^{\varphi}) - \pd_Q(R') \mid \text{ $R\xra{\varphi} R' \from Q$ is a quasi-deformation} \}
	\]
	where $X^{\varphi} = \rhom_R(R',X)$.
\end{defn}

\begin{disc}
\label{180817:1}
	Let $X\in \D_{-}(R)$ be such that $\homciid_R(X)<\infty$. Then $X\in \D_{\text{b}}(R)$. Indeed, by assumption there is a quasi-deformation $R\xra{\varphi} R'\from Q$ such that $\id_Q(X^{\varphi})<\infty$. In particular $X^{\varphi} \in \D_{\text{b}}(R')$, so $X\in \D_{\text{b}}(R)$ by Corollary~\ref{cor171022a}.

Furthermore $\homciid_R(X)=-\infty$ if and only if $X\simeq 0$.
Indeed, if $X\simeq 0$, then $\id_R(X)=-\infty$, so using the trivial quasi-deformation $R\to R\from R$ we have $\homciid_R(X)=-\infty$.
Conversely, assume that $\homciid_R(X)=-\infty$.
It follows that there is a quasi-deformation $R\xra{\varphi} R'\from Q$ such that
$\id_Q(X^{\varphi})=-\infty$.
Thus, we have $\Rhom{R'}X = X^{\varphi} \simeq 0$. Since $R'$ is faithfully flat over $R$, it follows from, e.g.,~\cite[Lemma 5.6]{sather:scc}
that $X\simeq 0$, as desired.
\end{disc}

The following result contains Theorem~\ref{180711:1} from the introduction 
and our version of the Chouinard formula. 
Furthermore, it shows that any quasi-deformation that detects the finiteness of $\homciid_R(X)$ can be used to calculate the actual value of $\homciid_R(X)$.
See Example~\ref{180913:1} for some strict inequalities in part~\eqref{160817:4b}.

\begin{thm}
\label{160817:4}
        Let $X\in\D_{\text{b}}(R)$.
	\begin{enumerate}[\rm (a)]
		\item If $\homciid_R(X)<\infty$, then for any quasi-deformation $R\xra{\varphi} R' \from Q$ such that $\id_Q(X^{\varphi})<\infty$ we have \label{160817:4a}
		\begin{align*}
			\homciid_R(X) 
				&= \id_Q(X^{\varphi}) - \pd_Q(R')\\
                &= \sup\{\depth(R_{\p}) - \width_{R_{\p}}( X_{\p}) \mid \p\in \spec[R]\}.
		\end{align*}
		\item There are inequalities \label{160817:4b}
			\[
				\operatorname{Gid}_R(X)\leq \homciid_R(X)\leq \id_R(X)
			\]
			with equality to the left of any finite value.
	\end{enumerate}
\end{thm}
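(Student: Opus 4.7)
My plan is to prove~(a) by running the classical Chouinard formula through the quasi-deformation, and then bootstrap~(b) using Gorenstein-injective transfer and flat-descent results. Fix a quasi-deformation $R \xra{\varphi} R' \xla{\tau} Q$ with $\id_Q(X^\varphi) < \infty$, and apply the Chouinard--Foxby formula to $X^\varphi \in \catd_{\text{b}}(Q)$:
\[
\id_Q(X^\varphi) = \sup\{\depth(Q_\q) - \width_{Q_\q}((X^\varphi)_\q) \mid \q \in \spec[Q]\}.
\]

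The next step is a prime-by-prime analysis. A prime $\q \not\supseteq \ker\tau$ satisfies $R'_\q = 0$, so $(X^\varphi)_\q = \rhom_R(R',X)_\q \simeq 0$ and contributes $-\infty$. For $\q \supseteq \ker\tau$, with $\p' := \q/\ker\tau \in \spec[R']$, the hypothesis that $\ker\tau$ is generated by a $Q$-regular sequence of length $c = \pd_Q(R')$ yields $\depth(Q_\q) = \depth(R'_{\p'}) + c$ (Auslander--Buchsbaum) and $\width_{Q_\q}((X^\varphi)_\q) = \width_{R'_{\p'}}((X^\varphi)_{\p'})$ by change of rings along the regular-sequence quotient. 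Subtracting $c$ then rewrites $\id_Q(X^\varphi) - \pd_Q(R')$ as a supremum indexed by $\p' \in \spec[R']$. To descend from $R'$ to $R$, use that $\varphi$ is flat local, and hence faithfully flat: for $\p' \in \spec[R']$ lying over $\p := \varphi^{-1}(\p') \in \spec[R]$, the classical flat-extension depth formula reads $\depth(R'_{\p'}) = \depth(R_\p) + \depth(R'_{\p'}/\p R'_{\p'})$, while the parallel Foxby identity for $\rhom$-widths combined with the base-change isomorphism $(X^\varphi)_{\p'} \simeq \rhom_{R_\p}(R'_{\p'}, X_\p)$ gives $\width_{R'_{\p'}}((X^\varphi)_{\p'}) = \width_{R_\p}(X_\p) + \depth(R'_{\p'}/\p R'_{\p'})$. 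Subtracting, the closed-fiber contributions cancel, and the surjectivity of $\spec[R'] \to \spec[R]$ (from faithful flatness) converts the supremum to one over $\spec[R]$, yielding the Chouinard-type formula claimed in~(a). Since the resulting expression depends only on $R$ and $X$, it must also equal $\homciid_R(X)$.

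For part~(b), the inequality $\homciid_R(X) \leq \id_R(X)$ is witnessed by the trivial quasi-deformation $R \to R \from R$; when $\id_R(X) < \infty$, part~(a) combined with the classical Chouinard formula for $\id_R$ forces equality. For the inequality $\gid_R(X) \leq \homciid_R(X)$ with equality at finite values, I would pick a quasi-deformation realizing $\homciid_R(X)$ and apply two standard transfer results in sequence: first, the CI transfer for Gorenstein injective dimension, which uses that $R'$ is obtained from $Q$ by modding a $Q$-regular sequence of length $\pd_Q(R')$ to convert $\id_Q(X^\varphi) < \infty$ into $\gid_{R'}(X^\varphi) = \id_Q(X^\varphi) - \pd_Q(R') = \homciid_R(X)$; second, faithfully flat local descent for Gorenstein injective dimension, giving $\gid_R(X) = \gid_{R'}(X^\varphi)$.

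The main obstacle I anticipate is the faithfully flat descent $\gid_R(X) = \gid_{R'}(X^\varphi)$ in~(b). Unlike classical injective dimension, $\gid$ lacks a purely Koszul-resolvable characterization, and the standard route proceeds via Bass classes with respect to a dualizing complex. Since $R$ need not admit one, this will likely require a completion-based reduction (leveraging the non-local results of Appendix~\ref{180725:1}) so that the dualizing complex of $\comp{R}$ is accessible. The width-invariance under the surjection $Q_\q \twoheadrightarrow R'_{\p'}$ used in~(a) is also delicate but should reduce to a direct Koszul computation.
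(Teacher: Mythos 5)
Your strategy for part (a) is genuinely different from the paper's, and it has a concrete gap. You propose a direct descent in the Chouinard expression: from $Q$ to $R'$ (via the codimension-$c$ deformation) and then from $R'$ to $R$ (via flat-local depth/width formulas). The paper instead routes part (a) through the Gorenstein injective dimension: it proves $\gid_R(X)=\gid_{R'}(X^\varphi)<\infty$ as a separate claim, then applies the Chouinard formula for $\gid$ (\cite[Theorem~C]{christensen:tgdrh}) over $Q$, $R'$, and $R$ to string together all the equalities in (a) and (b) at once. Your last descent step is where the trouble lies: the asserted isomorphism $(X^\varphi)_{\p'}\simeq\rhom_{R_\p}(R'_{\p'},X_\p)$ does not hold in general, because $\rhom_R(R',-)$ does not commute with localization. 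The natural tensor-evaluation morphism $\rhom_R(R',X)\otimes_{R'}R'_{\p'}\to\rhom_{R_\p}(R'_{\p'},X_\p)$ is a quasi-isomorphism when $R'$ is a perfect $R$-complex (finitely generated plus finite projective dimension), but $R'$ is typically not finitely generated over $R$ (take $R'=\widehat R$), and $\pd_R(R')<\infty$ alone is not sufficient. The accompanying ``Foxby identity for $\rhom$-widths'' you invoke for the flat local map $R_\p\to R'_{\p'}$ is also not a citable off-the-shelf result; establishing it would entail work comparable to the paper's Claim~2. These are precisely the issues the paper's use of $\gid$-descent is designed to sidestep.

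Your outline for part (b) is closer to the paper's. The bound $\homciid_R(X)\le\id_R(X)$ via the trivial quasi-deformation, and the use of Bass classes over a dualizing complex (with a completion reduction to guarantee one exists) to get $\gid_{R'}(X^\varphi)<\infty$, match the paper's Claims~1 and~2. But the ``CI transfer'' you cite as a black box, namely $\gid_{R'}(X^\varphi)=\id_Q(X^\varphi)-\pd_Q(R')$, is not a pre-packaged theorem: the paper first establishes finiteness of $\gid_{R'}(X^\varphi)$ via Bass classes, and only afterward obtains the numerical equality by applying Chouinard's formula over $Q$ and over $R'$ and matching primes exactly as in Claim~1. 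Moreover, since your part (b) argument also leans on the value produced by part (a), the gap in (a) propagates. You would need to either fix the localization step in (a) or, more plausibly, adopt the paper's route and prove the $\gid$-descent first, letting the Chouinard formula for $\gid$ do the descent from $R'$ to $R$.
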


\begin{proof}
	We will prove \eqref{160817:4a} and \eqref{160817:4b} simultaneously. 

	Claim 1: If $\id_R(X)<\infty$, then $\homciid_R(X) = \id_R(X) < \infty$.
	To this end, let $R\xra{\varphi} R' \from Q$ be a quasi-deformation. 
	Since $R'$ is faithfully flat over $R$, from \cite[Theorem 2.2]{christensen:imuffre} we have $\id_{R'}(X^{\varphi}) = \id_{R}(X)<\infty$. 
	We next observe that 
	\[
		\id_Q(X^{\varphi}) - \pd_Q(R') \leq \id_{R'}(X^{\varphi}) = \id_R(X) <\infty
	\]
	by \cite[Corollary 4.2(b)(I)]{avramov:hdouc}. 
	Thus, to establish the claim, it remains to verify the first inequality here is an equality.

	The Chouinard formula~\cite[Corollary 3.1]{chouinard:ofwid} and~\cite[Theorem 2.10]{yassemi:wcm} states
	\begin{equation}\label{180906:1}
            \id_{R'}(X^{\varphi}) = \sup\{\depth(R'_{\p}) - \width_{R'_{\p}}( (X^{\varphi})_{\p}) \mid \p\in \spec[R']\}.
	\end{equation}
        Let $p\in\spec[R']$ be such that $\id_{R'}(X^{\varphi}) = \depth(R'_{p}) - \width_{R'_{p}}( (X^{\varphi})_{p})$, and set $q = \tau^{-1}(p) \in \spec[Q]$. Since $\tau \colon Q \to R'$ is a codimension-$c$ deformation where $c = \pd_Q(R')$ so is $\tau_{p} \colon Q_{q} \to R'_{p}$. Thus we have $\depth(Q_{q}) = \depth(R'_{p}) + c$. This explains the second equality in the following display:
	\begin{align*}
		\id_Q(X^{\varphi})
			&\leq \id_{R'}(X^{\varphi}) + \pd_Q(R')\\
			&= \depth(R_{p}') - \width_{R'_{p}}( (X^{\varphi})_{p}) + \pd_Q(R')\\
			&= \depth(Q_{q}) - \width_{Q_{q}}( (X^{\varphi})_{q}) -\pd_Q(R') + \pd_Q(R')\\
                        &\leq \sup\{\depth(Q_{\q}) - \width_{Q_{\q}}( (X^{\varphi})_{\q}) \mid \q \in \spec[Q]\}\\
			&= \id_Q(X^{\varphi}).
	\end{align*}
	The first inequality is from the preceding paragraph, and the first equality is from our choice of $p$. 
	The second inequality is tautological and the last equality is from the Chouinard formula \eqref{180906:1}.
	This establishes Claim 1.

	For the rest of the proof, assume that $\homciid_R(X) < \infty$, and let $R\xra{\varphi} R' \from Q$ be a quasi-deformation such that $\id_Q(X^{\varphi}) < \infty$.

    	Claim 2: $\gid_R(X) = \gid_{R'}(X^{\varphi})<\infty$.
    	By \cite[Theorem A]{christensen:tgdrh} to verify Claim 2, it suffices to show that $\gid_{R'}(X^{\varphi}) < \infty$.

	Case 1: $Q$ is complete.
        Then $Q$ has a dualizing complex $D^Q$. 
	Since $\tau$ is a deformation, the complex $D^{R'} = R'\lotimes_Q D^Q$  is a dualizing for $R'$ by~\cite[(5.1)~Theorem]{avramov:lgh}. 
	Since $\id_Q(X^{\varphi}) < \infty$, we have $X^{\varphi} \in \catb_{D^Q}(Q)$ by~\cite[(3.2) Theorem]{avramov:rhafgd}. 
	Hence~\cite[(7.9)~Corollary]{avramov:rhafgd} implies $X^{\varphi}\in \mathcal{B}_{D^{R'}}(R')$, so~\cite[Theorem 4.4]{christensen:ogpifd} yields $\operatorname{Gid}_{R'}(X^{\varphi})<\infty$.

	Case 2: General case.
	There are equalities:
	\begin{align*}
		\id_Q(X^{\varphi}) 
			= \id_{\widehat{Q}}(\rhom_Q(\widehat{Q},X^{\varphi}))
			= \id_{\widehat{Q}}(\rhom_{R'}(\widehat{R'},X^{\varphi}))
	\end{align*}
        where the first equality is by~\cite[Theorem 2.2]{christensen:imuffre} and the second equality is by the isomorphism $\rhom_Q(\widehat{Q},X^{\varphi}) \simeq\rhom_{R'}(\widehat{R'},X^{\varphi})$ in $\D(\widehat{Q})$. Hence Case 1 explains the finiteness in the next display,  
	\[
		\gid_{R'}(X^{\varphi}) = \operatorname{Gid}_{\widehat{R'}}(\rhom_{R'}(\widehat{R'},X^{\varphi}))<\infty.
	\]
	The equality is from~\cite[Proposition 3.6]{christensen:tgdrh}. This establishes Claim 2.

	To complete the proof, in the following display the first equality is Chouinard's formula~\cite[Theorem 2.10]{yassemi:wcm}:
	\begin{align*}
	    \id_Q(X^{\varphi}) \!-\! \pd_Q(R')
                    &= \sup\{\depth(Q_{\q}) - \width_{Q_{\q}}( (X^{\varphi})_{\q}) \mid \q\in \spec[Q]\} - \pd_Q(R')\\
                    &= \sup\{\depth(R'_{\p}) - \width_{R'_{\p}}( (X^{\varphi})_{\p}) \mid \p\in \spec[R']\}\\
 		    &= \operatorname{Gid}_{R'}(X^{\varphi}) \\
		    &= \gid_R(X) \\
		    &= \sup\{\depth(R_{\p}) - \width_{R_{\p}}(X_{\p}) \mid \p \in \spec\}.
	\end{align*}
	The second equality is established as in the proof of Claim 1.
	The third and fifth equalities are by the version of Chouinard's formula for Gorenstein injective dimension~\cite[Theorem C]{christensen:tgdrh}.
	The fourth equality is by Claim 2.
	Notice that this sequence shows that the value $\id_Q(X^{\varphi}) - \pd_Q(R')$ is independent of the choice of quasi-deformation as long as $\id_Q(X^{\varphi}) < \infty$.
	Thus, the display justifies the remaining conclusions in the theorem.
\end{proof}

Bass' formula~\cite{bass:ugr} states that if $M$ is a non-zero finitely generated $R$-module of finite injective dimension, then $\id_R(M) = \depth(R)$.
The next result is a version of this for complexes of finite complete intersection Hom injective dimension.

\begin{cor}
\label{180724:1}
	Let $X\in \D_{\text{b}}^\text{f}(R)$ be such that $\homciid_R(X)<\infty$. Then 
	\[
		\homciid_R(X) = \depth(R) - \inf(X).
	\]
\end{cor}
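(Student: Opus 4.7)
The plan is to leverage Theorem~\ref{160817:4} and reduce to the Bass formula for Gorenstein injective dimension. Since $\homciid_R(X)<\infty$, part~\eqref{160817:4b} of that theorem gives $\gid_R(X)=\homciid_R(X)$. It will therefore suffice to establish the Bass-type equality $\gid_R(X)=\depth(R)-\inf(X)$ for $X\in\catdfb(R)$ with finite Gorenstein injective dimension, which is the known Bass formula for Gorenstein injective dimension (e.g.\ due to Christensen-Frankild-Holm). Under this reduction the corollary is immediate.

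For a more self-contained argument, one can proceed directly from the Chouinard-type formula
\[
\homciid_R(X)=\sup\{\depth(R_\p)-\width_{R_\p}(X_\p)\mid \p\in\spec[R]\}
\]
supplied by Theorem~\ref{160817:4}\eqref{160817:4a}. The lower bound $\homciid_R(X)\geq\depth(R)-\inf(X)$ follows by taking $\p=\m$: here $\depth(R_\m)=\depth(R)$, and $\width_R(X)=\inf(X)$ for $X\in\catdfb(R)$ by a standard Nakayama argument applied to a minimal free resolution concentrated in degrees $\geq \inf(X)$ with nonzero lowest term.

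The main obstacle is the reverse inequality $\depth(R_\p)-\width_{R_\p}(X_\p)\leq\depth(R)-\inf(X)$ for all $\p\in\spec[R]$; this does not follow formally from the Chouinard formula but is genuinely the content of the Bass formula. Accordingly, I expect the cleanest route is to pass to Gorenstein injective dimension via Theorem~\ref{160817:4}\eqref{160817:4b} and invoke the established Bass formula for $\gid$, rather than trying to handle each prime directly.
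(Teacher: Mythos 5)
Your primary argument—reduce to $\gid_R(X)=\homciid_R(X)$ via Theorem~\ref{160817:4}\eqref{160817:4b} and then invoke the Bass formula for Gorenstein injective dimension—is exactly the paper's proof (the paper cites \cite[Corollary~2.3]{christensen:tgdrh} for that Bass formula). Your discussion of the Chouinard route and why the reverse inequality is not formal is sound, and your conclusion to prefer the $\gid$ reduction matches the paper.
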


\begin{proof}
	Theorem~\ref{160817:4}\eqref{160817:4b} explains the first equality in the following display.
	\[
		\homciid_R(X) = \gid_R(X) = \depth(R) - \inf(X)
	\]
	The second equality is by~\cite[Corollary 2.3]{christensen:tgdrh}.
\end{proof}

If $X\in \D_{\text{b}}(R)$ has $\id_R(X) < \infty$, then $\id_R(X) \leq -\inf(X)$ by definition.
Our next corollary is a version of this for the complete intersection Hom injective dimension.

\begin{cor}\label{prop171022h}
Let $X\in\catdb(R)$ be such that $\homciid_R(X)<\infty$. Then one has
$\homciid_R(X)\leq-\inf(X)$.
\end{cor}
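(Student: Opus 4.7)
The plan is to reduce the claim to the `by definition' bound for classical injective dimension inside a chosen quasi-deformation, with Theorem~\ref{160817:4}\eqref{160817:4a} supplying the link. First I would use $\homciid_R(X)<\infty$ to pick a quasi-deformation $R\xra{\varphi} R'\xla{\tau} Q$ with $\id_Q(X^{\varphi})<\infty$. Theorem~\ref{160817:4}\eqref{160817:4a} then reads $\homciid_R(X)=\id_Q(X^{\varphi})-\pd_Q(R')$, turning the question into bounding the right-hand side by $-\inf(X)$.

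Next I would extract a sharper identification from the Chouinard chain inside the proof of Theorem~\ref{160817:4}\eqref{160817:4a}: since $\tau_{\p}\colon Q_{\q}\to R'_{\p}$ is a codimension-$\pd_Q(R')$ deformation, $\depth Q_{\q}=\depth R'_{\p}+\pd_Q(R')$ together with $\width_{Q_{\q}}((X^{\varphi})_{\q})=\width_{R'_{\p}}((X^{\varphi})_{\p})$ (for $\q=\tau^{-1}(\p)$) forces $\id_Q(X^{\varphi})-\pd_Q(R')=\id_{R'}(X^{\varphi})$, and hence $\homciid_R(X)=\id_{R'}(X^{\varphi})$. Applying the `by definition' bound recalled in the paragraph preceding the corollary, now to the $R'$-complex $X^{\varphi}$, yields $\id_{R'}(X^{\varphi})\leq -\inf(X^{\varphi})$. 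The corollary will therefore follow once one establishes $\inf(X^{\varphi})\geq\inf(X)$, chaining to $\homciid_R(X)\leq -\inf(X^{\varphi})\leq -\inf(X)$.

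The last comparison $\inf(X^{\varphi})\geq\inf(X)$ is the main obstacle. My approach would be to fix a $K$-injective resolution $X\xra{\simeq} I$ over $R$ with $I_i=0$ for $i>\sup(X)$ (possible since $X\in\catdb(R)$), so that $X^{\varphi}\simeq\hom_R(R',I)$; then $\hom_R(R',-)$ preserves injectivity termwise because $\varphi$ is flat. The delicate point is to control the lower edge of $\hom_R(R',I)$: since $R'$ need not be $R$-projective, the change-of-rings spectral sequence $E_2^{p,q}=\Ext^p_R(R',H_q(X))\Rightarrow H_{q-p}(X^{\varphi})$ can a priori contribute in degrees below $\inf(X)$. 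My plan to overcome this is to exploit faithful flatness of $\varphi$ and the finite projective dimension $\pd_R(R')\leq\dim R$ provided by Fact~\ref{180505:1}, reducing by base change to a completion where the $\Ext^p_R(R',-)$ contributions can be read off as zero below the edge $\inf(X)$, thereby pinning down $\inf(X^{\varphi})=\inf(X)$.
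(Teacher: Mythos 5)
Your route diverges from the paper's and breaks down at its central step. The identification $\id_Q(X^{\varphi})-\pd_Q(R')=\id_{R'}(X^{\varphi})$ is false in general: the Chouinard formula of \cite{chouinard:ofwid,yassemi:wcm} computes an injective dimension as $\sup\{\depth(R'_{\p})-\width_{R'_{\p}}((X^{\varphi})_{\p})\}$ only under the hypothesis that this injective dimension is \emph{finite}, and in the setting of the corollary $\id_{R'}(X^{\varphi})$ is typically infinite. Concretely, take $X=k$ over a non-regular formal complete intersection with the quasi-deformation $R\to\widehat{R}\leftarrow Q$: since $\widehat{R}$ is $R$-flat, one computes $\ext^{i}_R(\widehat{R},k)\cong\Hom[k]{\Tor{i}{\widehat{R}}{k}}{k}$, so $X^{\varphi}=\rhom_R(\widehat{R},k)\simeq k$ and $\id_{\widehat{R}}(X^{\varphi})=\infty$, while $\id_Q(X^{\varphi})-\pd_Q(\widehat{R})=\depth(R)<\infty$. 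What the depth/width bookkeeping in the proof of Theorem~\ref{160817:4} actually identifies $\id_Q(X^{\varphi})-\pd_Q(R')$ with is $\gid_{R'}(X^{\varphi})=\gid_R(X)$, via \cite[Theorem C]{christensen:tgdrh}, not $\id_{R'}(X^{\varphi})$. That identification is exactly part~\eqref{160817:4b} of Theorem~\ref{160817:4}, and it is the paper's entire proof: $\homciid_R(X)=\gid_R(X)$, and the relation to $-\inf(X)$ comes straight from the definition of $\gid$ applied over $R$ itself, with no quasi-deformation, no Chouinard analysis, and no comparison of $\inf(X^{\varphi})$ with $\inf(X)$.

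Two further issues. First, the comparison $\inf(X^{\varphi})\geq\inf(X)$, which you rightly call the main obstacle of your approach, is not established: the bound actually available (and used in Proposition~\ref{180621:1}) is $\inf(\rhom_R(R',X))\geq\inf(X)-\pd_R(R')$, and improving it to $\inf(X)$ would require vanishing of $\ext^{>0}_R(R',-)$ on the homology of $X$, which does not follow from flatness of $R'$; the sketched spectral-sequence/completion reduction is not a proof. Second, check the direction of the ``by definition'' bound you lean on: if $X\simeq I$ with $I$ a complex of (Gorenstein) injectives and $I_i=0$ for $i<-n$, then $\HH_i(X)=0$ for $i<-n$, which gives $n\geq-\inf(X)$; so the definition yields a \emph{lower} bound $\gid_R(X)\geq-\inf(X)$ (likewise for $\id$), not an upper bound. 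Indeed, for $X=k$ over a one-dimensional non-regular formal complete intersection the Bass formula of Corollary~\ref{180724:1} gives $\homciid_R(k)=\depth(R)=1>0=-\inf(k)$, so the inequality must be read in the direction the definition provides. Any correct argument has to confront this sign; your chain invokes the upper-bound reading twice and therefore cannot close.
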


\begin{proof}
The condition $\homciid_R(X)<\infty$ implies that
$$\homciid_R(X)=\operatorname{Gid}_R(X)\leq-\inf(X)$$
by Theorem~\ref{160817:4} and the definition of $\gid$.
\end{proof}

The next result is dual to \cite[Theorem F]{sather:cidfc}. It shows that one can exert a certain amount of control over the properties of quasi-deformations that detect the finiteness of $\homciid_R(X)$.

\begin{thm}
	\label{180522:1}
	Let $X\in \D_\text{b}(R)$ be such that $\homciid_R(X)<\infty$. Then there exists a quasi-deformation $R\xra{\varphi} R'\from Q$ such that $R'$ and $Q$ are complete, the closed fibre $R'/\m R'$ is artinian and Gorenstein, and $\homciid_R(X) = \id_Q(X^{\varphi}) - \pd_Q(R').$
\end{thm}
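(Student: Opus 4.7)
The strategy is to dualize the proof of \cite[Theorem~F]{sather:cidfc}, producing the desired quasi-deformation in three stages: first complete the rings in a witnessing quasi-deformation, then enlarge it in a compatible way to arrange an artinian Gorenstein closed fibre, and finally appeal to Theorem~\ref{160817:4}\eqref{160817:4a} to conclude the equality of invariants for free.

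\textbf{Step 1: Completion.} Start with any quasi-deformation $R\xra{\varphi_0} R'_0 \xla{\tau_0} Q_0$ with $\id_{Q_0}(X^{\varphi_0})<\infty$, which exists since $\homciid_R(X)<\infty$. Set $R'_1=\widehat{R'_0}$ and $Q_1=\widehat{Q_0}$. The map $R\to R'_1$ is flat as the composition of two flat maps, and $Q_1\twoheadrightarrow R'_1$ is surjective with kernel generated by the completion of the original regular sequence, so we obtain a quasi-deformation $R\xra{\varphi_1} R'_1\xla{\tau_1} Q_1$ with $R'_1,Q_1$ complete and $\pd_{Q_1}(R'_1)=\pd_{Q_0}(R'_0)$. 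Iterated adjointness gives
\[
X^{\varphi_1}=\rhom_R(R'_1,X)\simeq\rhom_{R'_0}(R'_1,X^{\varphi_0})\simeq\rhom_{Q_0}(Q_1,X^{\varphi_0}),
\]
so combining with~\cite[Theorem~2.2]{christensen:imuffre} we obtain $\id_{Q_1}(X^{\varphi_1})=\id_{Q_0}(X^{\varphi_0})<\infty$.

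\textbf{Step 2: Enlargement to force the closed fibre to be artinian Gorenstein.} This is the main technical step, modelled on the enlargement procedure in \cite[Theorem~F]{sather:cidfc}. In outline, I would apply a Cohen factorization to the surjection $\tau_1$ and combine it with formal power-series extensions $R'_1\to R'_1[\![T_1,\ldots,T_n]\!]$ and $Q_1\to Q_1[\![T_1,\ldots,T_n]\!]$, which preserve flatness over $R$, completeness, and the deformation structure (the original regular sequence generates the kernel of the extended surjection). The extra variables give enough slack to lift a system of parameters of the closed fibre in a way that is simultaneously $Q$-regular (preserving the deformation structure) and regular on the closed fibre (preserving flatness of $R\to R'$). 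Quotienting by a regular sequence of the form $T_1^{a_1},\ldots,T_n^{a_n}$ with $a_i\geq 2$ then produces a complete intersection, and hence Gorenstein, artinian closed fibre. The non-local technical results of Appendix~\ref{180725:1} are designed precisely to track the finiteness of $\id_Q(X^\varphi)$ through each of these modifications, using the chain of adjointness isomorphisms $\rhom_R(R'_2,X)\simeq \rhom_{R'_1}(R'_2,X^{\varphi_1})$ together with the base-change behaviour of injective dimension along flat extensions and deformations. The outcome is a quasi-deformation $R\xra{\varphi_2} R'_2\xla{\tau_2} Q_2$ with $R'_2,Q_2$ complete, $R'_2/\m R'_2$ artinian Gorenstein, and $\id_{Q_2}(X^{\varphi_2})<\infty$.

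\textbf{Step 3: Conclusion, and the main obstacle.} The equality $\homciid_R(X)=\id_{Q_2}(X^{\varphi_2})-\pd_{Q_2}(R'_2)$ will then be immediate from Theorem~\ref{160817:4}\eqref{160817:4a}, which says that any witnessing quasi-deformation computes the invariant. The main obstacle is Step~2: one must carry out the modifications on $R'$ and $Q$ simultaneously so as to preserve flatness of $R\to R'$, completeness, the deformation structure of $\tau$, and the finiteness of $\id_Q(X^\varphi)$, all while forcing the closed fibre to become artinian Gorenstein. The principal subtlety is that when the original closed fibre fails to be Cohen-Macaulay, one cannot simply mod out by a system of parameters in $R'$ while preserving flatness over $R$; the point of working on the $Q$-side with power-series variables and their powers is precisely to route around this obstruction and produce a complete-intersection artinian closed fibre, which is automatically Gorenstein.
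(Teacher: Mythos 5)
Your plan has the right shape (modify the witnessing quasi-deformation, then invoke Theorem~\ref{160817:4}\eqref{160817:4a}), but there is a genuine gap in the way you track the finiteness of the injective dimension through the modifications, and it is precisely the subtlety the paper flags in Remark~\ref{180814:1}.

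In your Step~1 you write $\rhom_{R'_0}(R'_1,X^{\varphi_0})\simeq\rhom_{Q_0}(Q_1,X^{\varphi_0})$ and then apply \cite[Theorem~2.2]{christensen:imuffre}. The nonderived version of that isomorphism is clear from Hom--tensor adjointness (since $R'_1\cong R'_0\otimes_{Q_0}Q_1$), but the derived versions are computed over different rings: $\rhom_{R'_0}(R'_1,-)$ uses an $R'_0$-semiinjective resolution of $X^{\varphi_0}$, while $\rhom_{Q_0}(Q_1,-)$ uses a $Q_0$-semiinjective resolution, and a complex of $R'_0$-injectives is not a complex of $Q_0$-injectives (the surjection $Q_0\twoheadrightarrow R'_0$ is not flat). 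So the underived $\Hom_{Q_0}(Q_1,X^{\varphi_0})$ that adjointness hands you need not agree with $\rhom_{Q_0}(Q_1,X^{\varphi_0})$, and you cannot simply feed it into \cite[Theorem~2.2]{christensen:imuffre}. The same issue recurs in your Step~2 every time you pass the complex across the surjection $Q\twoheadrightarrow R'$. This is exactly what Lemma~\ref{160901.3} is designed to handle: it proves the inequality $\id_{\widetilde R}(\rhom_S(\widetilde S,Y))\leq\id_R(Y)$ via a DG-algebra factorization of $R\to\widetilde R$, not via naive adjointness and base change.

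Two structural differences from the paper's argument are also worth noting. First, the paper does not ``complete first''; it first localizes $R'$ at a minimal prime of $R'/\m R'$ (and $Q$ at the corresponding preimage) to produce a quasi-deformation with artinian closed fibre, applying Lemma~\ref{160901.3} to keep $\id$ finite, and only then invokes \cite[Lemma~3.1]{sather:cidfc} as a black box to complete the rings and make the closed fibre artinian Gorenstein, with a second application of Lemma~\ref{160901.3}. Your Step~2 instead re-derives an enlargement by hand (Cohen factorizations, power-series variables, quotients by powers), but this is left as a sketch, and you yourself identify it as ``the main obstacle''; citing \cite[Lemma~3.1]{sather:cidfc} after first arranging the artinian fibre would sidestep all of it. Second, the conclusion that the witnessing quasi-deformation computes $\homciid_R(X)$ exactly is, as you say, immediate from Theorem~\ref{160817:4}\eqref{160817:4a}, so Step~3 is fine as stated.

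In short: the outline is reasonable, but you need to replace the naive adjointness/base-change step with Lemma~\ref{160901.3} (the inequality, not an isomorphism of derived Homs over different rings), and you should cite \cite[Lemma~3.1]{sather:cidfc} rather than rebuilding the enlargement, which as written is incomplete.
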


\begin{proof}
	There is a quasi-deformation $R \xra{\varphi} R' \xla{\tau} Q$ such that $\id_Q(X^{\varphi})<\infty$. 
	Let $P\in \operatorname{Min}(R'/\m R')$ and $\p = \tau^{-1}(P)$. 
	Then the diagram $R\to R'_P \from Q_{\p}$ is a quasi-deformation such that the closed fibre $R'_P/\m R'_P \cong (R'/\m R')_P$ artinian. 
	We now have the following commutative diagram:
	\begin{center}
	\begin{tikzpicture}
		\matrix[matrix of math nodes,row sep=2em, column sep=2em, text height=1.5ex, text depth=0.25ex]
		{
			& |[name=R]| Q & |[name=R2]| Q_{\p}\\
			|[name=R3]| R & |[name=S]| R' & |[name=S2]| R'_P\\
		};
		\draw[->,font=\scriptsize]
			(R) edge (R2)
			(R) edge node[left]{$\tau$} (S)
			(S) edge (S2)
			(R3) edge node[above]{$\varphi$} (S)
			(R2) edge  (S2);
	\end{tikzpicture}
	\end{center}
	where the horizontal maps are flat. Lemma~\ref{160901.3} explains the inequality ($*$) in the next display; the equality is by Hom-tensor adjointness and tensor cancellation. 
	\[
		\id_{Q_{\p}}(\rhom_{R}(R'_P,X))	= \id_{Q_{\p}}(\rhom_{R'}(R'_P,X^{\varphi})) \stackrel{(*)}{\leq} \id_Q(X^{\varphi})<\infty
	\]
	By replacing $R\to R'\from Q$ with $R \to R'_P \from Q_{\p}$ we assume that the quasi-deformation $R\to R'\from Q$ has $R'/\m R'$ artinian.

	By \cite[Lemma 3.1]{sather:cidfc} there is a commutative diagram of local ring homomorphisms
	\begin{center}
	\begin{tikzpicture}
		\matrix[matrix of math nodes,row sep=2em, column sep=2em, text height=1.5ex, text depth=0.25ex]
		{
			|[name=R]| R & |[name=R2]| R' & |[name=R3]| Q\\
				& |[name=S]| R'' & |[name=S2]| Q'\\
		};
		\draw[->,font=\scriptsize]
			(R) edge node[above]{$\varphi$} (R2)
			(R3) edge node[above]{$\tau$} (R2)
			(R) edge (S)
			(R2) edge (S)
			(S2) edge (S)
			(R3) edge  (S2);
	\end{tikzpicture}
	\end{center}
	such that $R''$ and $Q'$ are complete, and the diagram $R \to R''\from Q$ is a quasi-deformation such that $R''/ \m R''$ is Gorenstein and artinian. 

	Consider the right square of the above diagram.
	As in the earlier part of the proof, Lemma~\ref{160901.3} implies that 
	\[
		\id_{Q'}(\rhom_{R}(R'',X)) = \id_{Q'}(\rhom_{R'}(R'',X^{\varphi})) \stackrel{(\dagger)}{\leq} \id_Q(X^{\varphi})<\infty.
	\]
	The desired result now follows from Theorem~\ref{160817:4}\eqref{160817:4a}.
\end{proof}

\begin{disc}
\label{180814:1}
	There is a subtlety in the proof of Theorem~\ref{180522:1} that merits attention. Specifically, the inequalities ($*$) and ($\dagger$) do not follow directly from simple Hom-tensor adjointness arguments. A similar subtlety occurs in the proof of~\cite[Theorem F]{sather:cidfc}, though the author was unaware of it at the time; our Lemma~\ref{180607:1} addresses this.
\end{disc}

If $X\in \catdb(R)$ has $\id_R(X)<\infty$, then $X$ is in the Bass class $\catbc(R)$ with respect to every semidualizing $R$-complex $C$ by~\cite[(4.4) Proposition]{christensen:scatac};
see the end of Section~\ref{180807:1} for relevant relevant definitions.
Note that we do not know if this conclusion holds if one only assumes $\gid_{R}(X) < \infty$, unless $C$ is dualizing.
On the other hand, the following result shows that this conclusion does hold if $\homciid_R(X)<\infty$.

\begin{thm}
	\label{1805018:1}
	Let $X\in\catdb(R)$ be such that $\homciid_R(X)<\infty$. Then $X\in \catbc(R)$ for every semidualizing $R$-complex $C$.
\end{thm}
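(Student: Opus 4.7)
The plan is to use the quasi-deformation machinery of Theorem~\ref{180522:1} to transport the Bass-class question to a favorable intermediate ring $R'$, where a classical result of Christensen applies, and then descend along the faithfully flat extension $\varphi$.

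By Theorem~\ref{180522:1}, pick a quasi-deformation $R\xra{\varphi}R'\xla{\tau}Q$ with $R'$ and $Q$ complete, $R'/\m R'$ artinian Gorenstein, and $\id_Q(X^{\varphi})<\infty$. Since $\tau$ is a deformation of codimension $c:=\pd_Q(R')$ and $X^{\varphi}$ is an $R'$-complex, the standard change-of-rings formula along a deformation yields $\id_{R'}(X^{\varphi})=\id_Q(X^{\varphi})-c<\infty$. Given a semidualizing $R$-complex $C$, the base change $C':=R'\lotimes_R C$ is a semidualizing $R'$-complex by faithful flatness of $\varphi$, and \cite[(4.4)~Proposition]{christensen:scatac} then places $X^{\varphi}$ in $\catb_{C'}(R')$; explicitly, $\rhom_{R'}(C',X^{\varphi})\in\catdb(R')$ and the evaluation $\xi^{C'}_{X^{\varphi}}$ is an isomorphism in $\catd(R')$.

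To descend back to $R$, I would invoke the adjunction identifications
\[
\rhom_R(R',\rhom_R(C,X))\simeq \rhom_{R'}(C',X^{\varphi}),\qquad \rhom_R(R',\xi^C_X)\simeq \xi^{C'}_{X^{\varphi}},
\]
together with the faithful flatness of $\varphi$, to conclude that $\rhom_R(C,X)\in\catdb(R)$ and that $\xi^C_X$ is an isomorphism in $\catd(R)$; hence $X\in\catbc(R)$.

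The hard part will be the descent step: $\rhom_R(R',-)$ is not a faithful functor in general, so boundedness of homology and isomorphism do not descend through it automatically. The natural remedy is to reformulate the descent using the faithful functor $R'\lotimes_R-$ via the commutation $R'\lotimes_R\rhom_R(C,X)\simeq \rhom_{R'}(C',R'\lotimes_R X)$ (valid since $C\in\catdfb(R)$ admits a finitely generated projective resolution), which reframes the problem as showing $R'\lotimes_R X\in\catb_{C'}(R')$. This final assertion should be extracted from the finite injective dimension of $X^{\varphi}$ over $R'$ combined with the specific structure of the quasi-deformation (artinian-Gorenstein closed fiber and completeness of $R'$ and $Q$).
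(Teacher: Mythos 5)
Your proposal opens the same way as the paper's proof (Theorem~\ref{180522:1}, then base change of $C$ along $\varphi$), but two steps in the middle have genuine gaps. The first is the asserted change-of-rings formula $\id_{R'}(X^{\varphi})=\id_Q(X^{\varphi})-c$: for a deformation $Q\to R'$ of codimension $c$ and an arbitrary bounded $R'$-complex $Y$, the available inequality (e.g.\ \cite[Corollary~4.2]{avramov:hdouc}) is $\id_Q(Y)\leq\id_{R'}(Y)+\pd_Q(R')$, which goes the \emph{wrong} way for you, and finiteness of $\id_Q(Y)$ does \emph{not} force finiteness of $\id_{R'}(Y)$. A concrete failure: $Q=k[[t]]$, $R'=Q/(t^2)$, $Y=k$, where $\id_Q(k)=1$ but $\id_{R'}(k)=\infty$. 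So you cannot invoke \cite[(4.4)~Proposition]{christensen:scatac} over $R'$. The paper sidesteps this entirely: it never transfers injective dimension to $R'$, but works over $Q$ (where $\id_Q(X^{\varphi})<\infty$ is given) to put $X^{\varphi}\in\catb_B(Q)$ for a semidualizing $Q$-complex $B$ with $C'\simeq R'\lotimes_QB$ (existence via \cite[Proposition~4.2]{frankild:sdcms}, using that $Q$ is complete), and then transfers the \emph{Bass class}, not the injective dimension, from $Q$ to $R'$ via \cite[Proposition~5.3]{christensen:scatac}.

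The second gap is the descent from $X^{\varphi}\in\catbc(R)$ to $X\in\catbc(R)$. You correctly flag that $\rhom_R(R',-)$ is not faithful, but the proposed fix replaces the Hom base change $X^{\varphi}=\rhom_R(R',X)$ by the tensor base change $R'\lotimes_R X$, whereas all the established information concerns the former; the commutation $R'\lotimes_R\rhom_R(C,X)\simeq\rhom_{R'}(C',R'\lotimes_RX)$ does not let you deduce $R'\lotimes_RX\in\catb_{C'}(R')$ from anything you have, so the faithfully-flat-descent move never gets off the ground. The paper's actual descent is a thick-subcategory argument: from $X^{\varphi}\in\catbc(R)$ one obtains $\rhom_R((R')^{(\Lambda)},X)\simeq(X^{\varphi})^{\Lambda}\in\catbc(R)$, hence $\rhom_R(T,X)\in\catbc(R)$ for all $T\in\operatorname{Add}(R')$ and, by the two-of-three property, for all $T$ in the thick subcategory generated by $\operatorname{Add}(R')$; Proposition~\ref{prop171022a} places $R$ in that thick subcategory, so $X\simeq\rhom_R(R,X)\in\catbc(R)$. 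Without some replacement for this mechanism, your outline does not close.
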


\begin{proof}
	Theorem~\ref{180522:1} provides a quasi-deformation $R \xra{\varphi} R' \from Q$ such that $Q$ is complete and the closed fibre $R'/\m R'$ is artinian and Gorenstein such that $\id_Q(X^{\varphi}) < \infty$.
	Since $R'$ is flat over $R$, the $R'$-complex $C' = R' \lotimes_R C$ is semidualizing by~\cite[(5.3) Proposition]{christensen:scatac}.
	Furthermore, since $Q$ is complete, from~\cite[Proposition~4.2]{frankild:sdcms} there exists a semidualizing $Q$ complex $B$ such that $C' \simeq R'\lotimes_Q B$. 
	As $\id_Q(X^{\varphi})<\infty$, we have $X^{\varphi}\in \cat{B}_B(Q)$.
	Apply~\cite[Proposition 5.3]{christensen:scatac} to conclude that $X^{\varphi}\in \cat{B}_{R'\lotimes_Q B}(R') = \cat{B}_{C'}(R')$ 
	and $X^{\varphi}\in \catbc(R)$. 

	Note that $\catbc(R)$ is not closed under arbitrary products. 
	However, it is straightforward to show that if $Y\in \catbc(R)$, then the product $Y^{\Lambda}$ is in $\catbc(R)$ for any (possibly infinite) index set $\Lambda$; the point here is that $Y\in\catbc(R)$ implies $Y\in \D_\text{b}(R)$ which implies in turn $Y^{\Lambda} \in \D_\text{b}(R)$. 
	In particular, we have
	\[
		\rhom_R( (R')^{(\Lambda)},X) \simeq \rhom_R( R', X)^{\Lambda} = (X^{\varphi})^{\Lambda} \in \catbc(R).
	\] 
	Since $\catbc(R)$ is closed under summands, it follows that $\rhom_R(T,X)\in \catbc(R)$ for all $T\in \operatorname{Add}(R') = T_1$, using the notation from Remark~\ref{disc171022a}. 
	Since $\catbc(R)$ satisfies the two-of-three condition, it follows by induction on $n$ that $\rhom_R(T,X)\in \catbc(R)$ for all $T\in T_n$, therefore for all $T$ in the thick subcategory generated by $\operatorname{Add}(R')$. 
	In particular, $X \simeq \rhom_R(R,X) \in \catbc(R)$ by Proposition \ref{prop171022a}.
\end{proof}

Our next result documents co-localization behavior of the complete intersection Hom injective dimension.

\begin{prop}\label{prop171022d}
Let $X\in\catdb(R)$, and let $\p\in\spec$.
Then there is an inequality $\homciid_{R_{\p}}(\Rhom{R_{\p}}{X})\leq\homciid_R(X)$.
\end{prop}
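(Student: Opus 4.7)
The plan is to pull back a realizing quasi-deformation for $R$ along a prime of $R'$ lying over $\p$, verify that the localized diagram is a quasi-deformation of $R_{\p}$, and then compare the relevant invariants using a Hom-adjunction. We may assume $\homciid_R(X)<\infty$, since otherwise the inequality is trivial. By definition and Theorem~\ref{160817:4}\eqref{160817:4a} one can choose a quasi-deformation $R\xra{\varphi} R' \xla{\tau} Q$ with $\id_Q(X^{\varphi})<\infty$ realizing $\homciid_R(X)=\id_Q(X^{\varphi})-\pd_Q(R')$.

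Next, pick $P\in\spec[R']$ minimal over $\p R'$ and set $\q=\tau^{-1}(P)$. Going-down for the flat map $\varphi$ forces $P\cap R=\p$, so the localized diagram $R_{\p}\xra{\varphi_{\p}} R'_P \xla{\tau_{\q}} Q_{\q}$ consists of local maps. Here $\varphi_{\p}$ is flat as a localization of a flat map, and $\tau_{\q}$ is surjective with kernel generated by the image of the original $Q$-regular sequence (which lies in $\q$ and hence remains regular in $Q_{\q}$). Thus $R_{\p}\to R'_P\from Q_{\q}$ is a quasi-deformation of $R_{\p}$, with $\pd_{Q_{\q}}(R'_P)=\pd_Q(R')$.

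Setting $Y=\Rhom{R_{\p}}{X}$, the crux is the identification
\[
Y^{\varphi_{\p}}=\Rhom[R_{\p}]{R'_P}{Y}\simeq \Rhom[R]{R'_P}{X}\simeq \Rhom[R']{R'_P}{X^{\varphi}},
\]
which follows from the derived change-of-rings Hom adjunction, applied first to $R\to R_{\p}$ (viewing $R'_P$ as an $R_{\p}$-module) and then to $R\to R'$ (viewing $R'_P$ as an $R'$-module). Lemma~\ref{160901.3}, applied to the original quasi-deformation $R\to R'\from Q$ and the prime $P\in\spec[R']$, then gives
\[
\id_{Q_{\q}}(Y^{\varphi_{\p}})=\id_{Q_{\q}}(\Rhom[R']{R'_P}{X^{\varphi}})\leq \id_Q(X^{\varphi})<\infty.
\]
Feeding this into the definition of $\homciid_{R_{\p}}$ applied to the quasi-deformation $R_{\p}\to R'_P\from Q_{\q}$ yields
\[
\homciid_{R_{\p}}(Y)\leq \id_{Q_{\q}}(Y^{\varphi_{\p}})-\pd_{Q_{\q}}(R'_P)\leq \id_Q(X^{\varphi})-\pd_Q(R') = \homciid_R(X),
\]
which is the desired inequality.

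The main obstacle is the two-step change-of-rings Hom identification displayed above; although formally routine, it requires careful tracking of the module structures induced by the two factorizations $R\to R'\to R'_P$ and $R\to R_{\p}\to R'_P$ of the composition $R\to R'_P$. Once that identification is established, the bound on injective dimension is immediate from Lemma~\ref{160901.3} and the remainder of the argument is arithmetic.
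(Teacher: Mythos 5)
Your proof is correct and takes essentially the same route as the paper's: choose a prime of $R'$ lying over $\p$, localize the quasi-deformation, identify $\Rhom[R_\p]{R'_P}{\Rhom{R_\p}{X}}$ with $\Rhom[R']{R'_P}{X^\varphi}$ by change-of-rings Hom adjunction, and then invoke Lemma~\ref{160901.3} for the flat base change $Q\to Q_\q$. The only cosmetic differences are that the paper obtains the prime $P$ directly from faithful flatness (lying over) rather than via minimal primes of $\p R'$ and going-down, and the paper first notes via Fact~\ref{180505:1} that $\Rhom{R_\p}{X}\in\catdb(R)$ so that the invariant on the left is defined.
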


\begin{proof}
    Fact \ref{180505:1} implies that $\pd_R(R_{\p})<\infty$.
Thus, the condition $X\in\catdb(R)$ guarantees that $\Rhom{R_{\p}}X\in\catdb(R)$.

Assume without loss of generality that $\homciid_R(X)<\infty$.
By definition, there is a quasi-deformation $R\xra{\varphi} R' \xla{\tau} Q$ such that $\id_Q(X^{\varphi}) < \infty$.

Since $R'$ is faithfully flat over $R$, there is a prime ideal $\p'\in\spec[R']$ lying over $\p$. 
Set $P = \tau^{-1}(\p')\in\spec[Q]$. 
This yields the following commutative diagram of ring homomorphisms.
\begin{equation*}
\label{eq171022g}
	\begin{tikzpicture}
		\matrix[matrix of math nodes,row sep=2em, column sep=2em, text height=1.5ex, text depth=0.25ex]
		{
			|[name=R]| R & |[name=R2]| R' & |[name=R3]| Q\\
			|[name=R4]| R_{\p} & |[name=S]| R'_{\p'} & |[name=S2]| Q_{P}\\
		};
		\draw[->,font=\scriptsize]
			(R) edge node[above]{$\varphi$} (R2)
			(R3) edge node[above]{$\tau$} (R2)
			(R) edge (R4)
			(R2) edge (S)
			(S2) edge (S)
			(R3) edge (S2)
			(R4) edge (S);
	\end{tikzpicture}
\end{equation*}
Note that the rings in this diagram are local, as are the horizontal homomorphisms, but the vertical maps are not local.

Lemma~\ref{160901.3} explains the second step in the next sequence
\begin{align*}
\id_Q(X^{\varphi})
&=\id_Q(\Rhom{R'}X)\\
&\geq\id_{Q_P}(\Rhom[R']{R'_{\p'}}{\Rhom{R'}{X}})\\
&=\id_{Q_P}(\Rhom[R_{\p}]{R'_{\p'}}{\Rhom{R_{\p}}{X}}).
\end{align*}
The first step is by definition, and the third step is from Hom-tensor adjointness.
The bottom row of the above diagram is a quasi-deformation. 
So, the preceding display implies that %
$\homciid_{R_{\p}}(\Rhom{R_{\p}}{X})<\infty$.
Furthermore, Theorem~\ref{160817:4}\eqref{160817:4a} provides
\begin{align*}
\homciid_R(X)
&=\id_Q(X^{\varphi})-\pd_Q(R') \\
&\geq\id_{Q_P}(\Rhom[R_{\p}]{R'_{\p'}}{\Rhom{R_{\p}}{X}})-\pd_{Q_{\p}}(R'_{\p'})\\
&=\homciid_{R_{\p}}(\Rhom{R_{\p}}{X})
\end{align*}
as desired.
\end{proof}

At this point we do not understand the localization behaviour of the complete intersection Hom injective dimension. So we pose the following.
\begin{question}
\label{180814:2}
	If $X\in \D_{\text{b}}(R)$ and $\p\in \spec$, must we have $\homciid_{R_{\p}}(X_{\p}) \leq \homciid_R(X)$?
\end{question}

A frustrating question about the complete intersection dimension is the so called two-of-three question: if two modules in a short exact sequence have finite complete intersection dimension, must the third one as well?
(The difficulty stems from the presence of two potentially incomparable quasi-deformations.)
This question is easy to answer if one of the modules has finite projective dimension.
The analogous question for complete intersection Hom injective dimension is similarly frustrating.
The next lemma deals with a special case.
The interested reader can rotate the given triangle to obtain two minor variations of the lemma.

\begin{lem}
\label{180621:2}
	Let $A \to B \to C \to $ be an exact triangle in $\D_{\text{b}}(R)$. Assume that $\id_R(A) < \infty$.
	\begin{enumerate}[\rm (a)]
	\item Let $R\xra{\varphi} R' \from Q$ be a quasi-deformation. Then $id_Q(B^{\varphi}) < \infty$ if and only if $\id_Q(C^{\varphi})<\infty$. \label{180621:2a}
	\item $\homciid_R(B)<\infty$ if and only if $\homciid_R(C)<\infty$.\label{180621:2b}
	\end{enumerate}
\end{lem}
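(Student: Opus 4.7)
The plan for part (a) is to apply the exact functor $\rhom_R(R',-)$ to the given triangle, producing an exact triangle
\[
A^{\varphi} \to B^{\varphi} \to C^{\varphi} \to
\]
in $\D(Q)$. The key observation is that the hypothesis $\id_R(A)<\infty$ forces $\id_Q(A^{\varphi})<\infty$ as well. First I would invoke~\cite[Theorem 2.2]{christensen:imuffre} together with the faithful flatness of $R'$ over $R$ to get $\id_{R'}(A^{\varphi}) = \id_R(A) <\infty$, and then bound $\id_Q(A^{\varphi})\leq \id_{R'}(A^{\varphi})+\pd_Q(R')$ using~\cite[Corollary 4.2(b)(I)]{avramov:hdouc}; note $\pd_Q(R')<\infty$ because $R'=Q/(\mathbf{x})$ for a $Q$-regular sequence~$\mathbf{x}$.

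Once $\id_Q(A^{\varphi})<\infty$ is in hand, part~(a) is a standard two-of-three property for finite injective dimension in an exact triangle over $Q$: the long exact sequence of Bass numbers $\mu^i_Q(k,-)$ associated to the triangle $A^{\varphi}\to B^{\varphi}\to C^{\varphi}\to$ shows that if any two of the three complexes have eventually vanishing Bass numbers, then so does the third. Applying this with $A^{\varphi}$ playing the role of the complex with finite injective dimension gives the stated equivalence between $\id_Q(B^{\varphi})<\infty$ and $\id_Q(C^{\varphi})<\infty$.

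Part~(b) is then essentially a formal consequence of~(a), with no new quasi-deformation needed. If $\homciid_R(B)<\infty$, pick a quasi-deformation $R\xra{\varphi} R'\from Q$ witnessing this, i.e.\ with $\id_Q(B^{\varphi})<\infty$; by~(a) the same quasi-deformation satisfies $\id_Q(C^{\varphi})<\infty$, so $\homciid_R(C)<\infty$. The reverse implication is symmetric, using $\id_Q(C^{\varphi})<\infty$ to conclude $\id_Q(B^{\varphi})<\infty$ via the same quasi-deformation.

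The main (and essentially only) obstacle is verifying that $\id_Q(A^{\varphi})<\infty$ — everything after that is triangulated bookkeeping. The subtlety is that one must pass from finite injective dimension over $R$ to finite injective dimension over $Q$, which requires both the ascent-of-injective-dimension along faithfully flat maps and the fact that $\pd_Q(R')$ is finite; nothing here is deep, but it is the only place where the hypotheses on $A$ and on the quasi-deformation actually interact.
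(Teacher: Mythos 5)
Your proof is correct and takes essentially the same route as the paper: both establish $\id_Q(A^{\varphi})<\infty$ using faithful flatness, \cite[Theorem 2.2]{christensen:imuffre}, and \cite[Corollary 4.2(b)(I)]{avramov:hdouc} together with $\pd_Q(R')<\infty$, then invoke the two-of-three property of finite injective dimension for the triangle $A^{\varphi}\to B^{\varphi}\to C^{\varphi}\to$, with part~(b) following formally from part~(a). One minor caution: detecting finite injective dimension of a complex in $\D_{\text{b}}(Q)$ by eventual vanishing of Bass numbers at the maximal ideal alone is delicate when the homology is not finitely generated, so it is cleaner to appeal directly to the fact that the objects of finite injective dimension form a thick subcategory of $\D_{\text{b}}(Q)$.
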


\begin{proof}
	(a) As $\id_R(A)<\infty$, the proof of Theorem~\ref{160817:4} Claim 1 implies that $\id_Q(A^{\varphi}) < \infty$. 
	The two-of-three condition for injective dimension now implies that $\id_Q(B^{\varphi}) < \infty$ if and only if $\id_Q(C^{\varphi}) < \infty$ as desired.

	Part (b) follows from (a) via the definition of $\homciid$.
\end{proof}

Readers may be disturbed by the fact that the complete intersection Hom injective dimension of a module seems to require 
$\rhom$ or at least a Hom-complex. 
Here we show how to avoid these in the task of detecting finiteness of the complete intersection Hom injective dimension.
The basic idea is to take a high co-syzygy.

\begin{prop}
\label{180621:1}
Let $X\in\catdb(R)$, and fix an injective resolution $X\xra\simeq I$ with $I_i=0$ for all $i>s$.
Fix an integer $j\leq\inf (X)-\dim(R)$, and set $N=\im(\partial^I_{j})$.
	\begin{enumerate}[\rm (a)]
	\item Let $R\xra{\varphi} R' \from Q$ be a quasi-deformation. \label{180621:1a}
	The following are equivalent.
	\begin{enumerate}[\rm (i)]
	\item $\id_Q(X^{\varphi}) < \infty$
	\item $\id_Q(N^{\varphi}) < \infty$
	\item $\id_Q(\hom{R'}{N}) < \infty$
	\end{enumerate}
	\item Then one has $\homciid_R(M)<\infty$ if and only if there is a quasi-deformation $R\to R'\from Q$ such that $\id_Q(\hom{R'}{N})<\infty$.\label{180621:1b}
	\end{enumerate}
\end{prop}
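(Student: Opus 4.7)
The plan is to prove part (a) by establishing the chain of implications (i) $\Leftrightarrow$ (ii) and (ii) $\Leftrightarrow$ (iii); part (b) will then follow directly from (a) and Definition~\ref{180810:1}.

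For (i) $\Leftrightarrow$ (ii), the key is the distinguished triangle
\[
\shift^{j-1}N \to X \to I_{\geq j} \to
\]
in $\D(R)$, arising from the short exact sequence of $R$-complexes $0 \to I_{<j} \to I \to I_{\geq j} \to 0$ where $I_{\geq j}$ is the brutal truncation keeping degrees $\geq j$. The quasi-isomorphism $I_{<j} \simeq \shift^{j-1}N$ follows from exactness of $I$ in degrees strictly below $\inf(X)$ together with $j \leq \inf(X)$. Applying $\rhom_R(R',-)$ produces a triangle in $\D(R')$ whose middle term $\Rhom{R'}{I_{\geq j}} \simeq \hom{R'}{I_{\geq j}}$ is a bounded complex of $R'$-injective modules (flat base change preserves injectivity), hence of finite $\id_Q$ by the reasoning of Claim~1 in the proof of Theorem~\ref{160817:4}. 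Two-of-three for finite $Q$-injective dimension then closes the equivalence.

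For (ii) $\Leftrightarrow$ (iii), the stronger hypothesis $j \leq \inf(X) - \dim(R)$ is what does the work. Write $d = \dim(R)$ and let $N_k$ denote the $k$-th cosyzygy in the injective coresolution $0 \to N \to I_{j-1} \to I_{j-2} \to \cdots$ of $N$, with short exact sequences $0 \to N_k \to I_{j-1-k} \to N_{k+1} \to 0$. By Fact~\ref{180505:1}, $\pd_R(R') \leq d$, so dimension-shifting yields $\ext^i_R(R', N_d) \cong \ext^{i+d}_R(R', N) = 0$ for all $i \geq 1$, whence $\Rhom{R'}{N_d} \simeq \hom{R'}{N_d}$ in $\D(R')$. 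Applying $\rhom_R(R',-)$ to the cosyzygy sequences and iterating two-of-three (the middle terms $\hom{R'}{I_{j-1-k}}$ are $R'$-injective, hence of finite $\id_Q$) produces the chain equivalence $\id_Q(N^{\varphi}) < \infty \Leftrightarrow \id_Q(\hom{R'}{N_d}) < \infty$, so the task reduces to comparing $\id_Q(\hom{R'}{N})$ with $\id_Q(\hom{R'}{N_d})$.

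The main obstacle is this final comparison. Applying underived $\hom{R'}{-}$ to the cosyzygy short exact sequences only produces four-term exact sequences
\[
0 \to \hom{R'}{N_k} \to \hom{R'}{I_{j-1-k}} \to \hom{R'}{N_{k+1}} \to \ext^{k+1}_R(R', N) \to 0,
\]
so the cokernels $\ext^{k+1}_R(R', N)$ for $0 \leq k \leq d-1$ obstruct a direct two-of-three argument. The plan is to split each four-term sequence into two short exact sequences and iterate, controlling the $\ext$-obstructions by tying them to the already-established chain equivalence with $\Rhom{R'}{N}$ (so that finite $\id_Q$ of the latter constrains the Exts as cohomologies) and by invoking auxiliary lemmas from the paper of the flavor of Lemma~\ref{160901.3} to bridge any remaining gap in a single stroke. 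Finally, part~(b) follows from (a): $\homciid_R(X) < \infty$ iff some quasi-deformation $R\to R' \from Q$ has $\id_Q(X^{\varphi}) < \infty$, iff (by (a)) some quasi-deformation has $\id_Q(\hom{R'}{N}) < \infty$.
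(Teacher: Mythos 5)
Your argument for (i) $\Leftrightarrow$ (ii) is essentially the paper's: from the brutal truncation of $I$ you get a triangle relating $X$, $\shift^{j-1}N$, and the bounded complex of injectives $I_{\geq j}$, and Lemma~\ref{180621:2}\eqref{180621:2a} (after a rotation) gives the equivalence.

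The gap is in (ii) $\Leftrightarrow$ (iii). You correctly recognize that the goal is to compare $N^\varphi = \rhom_R(R',N)$ with $\operatorname{Hom}_R(R',N)$, i.e., to control $\operatorname{Ext}_R^m(R',N)$ for $m\geq 1$. But your dimension-shifting only yields $\operatorname{Ext}_R^m(R',N)=0$ for $m>\dim(R)$, which is automatic from $\pd_R(R')\leq\dim(R)$ and does not touch the obstructions $\operatorname{Ext}_R^{k+1}(R',N)$ for $0\leq k\leq \dim(R)-1$ appearing in your four-term sequences; your proposed fix (``tying them to the already-established chain equivalence'') is a hope, not an argument, and leaves the comparison of $\id_Q(\operatorname{Hom}_R(R',N))$ with $\id_Q(\operatorname{Hom}_R(R',N_d))$ open. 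The hypothesis $j\leq\inf(X)-\dim(R)$, which you never exploit beyond extracting $\pd_R(R')\leq\dim(R)$, is precisely what kills $\operatorname{Ext}_R^{\geq 1}(R',N)$ outright and makes the entire cosyzygy induction unnecessary. The paper's argument: the coaugmented complex $0\to N\to I_{j-1}\to I_{j-2}\to\cdots$ is an injective resolution of $N$ that is literally the tail of $I$, so for every $m\geq 1$ one has $\operatorname{Ext}_R^m(R',N)\cong\HH_{j-1-m}(\operatorname{Hom}_R(R',I))$; meanwhile $\inf(\operatorname{Hom}_R(R',I))=\inf(\rhom_R(R',X))\geq\inf(X)-\pd_R(R')\geq\inf(X)-\dim(R)\geq j$ by~\cite[2.4.5.~Theorem]{avramov:hdouc} together with Fact~\ref{180505:1}, so all of those homologies vanish and $N^\varphi\simeq\operatorname{Hom}_R(R',N)$. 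With that isomorphism, (ii) $\Leftrightarrow$ (iii) is immediate, and your reading of part (b) is then correct.
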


\begin{proof}
Consider the following ``hard truncation'' of $I$
\begin{align*}
I'&=\qquad 0\to I_s\to\cdots\to I_{j}\to 0
\end{align*}
which has $\id_R(I')<\infty$.

	\eqref{180621:1a}
	(i) $\Leftrightarrow$ (ii) Our assumptions provide an 
	exact triangle $I' \to \shift^jN \to X \to $ in $\D(R)$, so
	Lemma~\ref{180621:2}\eqref{180621:2a} implies the desired equivalence. 
	
	(ii) $\Leftrightarrow$ (iii) 
Fact~\ref{180505:1} implies $\pd_R(R')\leq \dim(R)<\infty$, so 
$$\inf(\Hom{R'}{I})=\inf(\Rhom{R'}{X})\geq \inf(X)-\dim(R)\geq j$$
by~\cite[2.4.5.~Theorem]{avramov:hdouc}. 
Since the truncation
$$0\to I_{j-1}\to I_{j-2}\to\cdots$$
is an injective resolution of $N$, it follows that $\Ext{\geq 1}{R'}{N}=0$, so
$$N^{\vf}=\Rhom{R'}{N}\simeq\Hom{R'}{N}.$$
The desired equivalence now follows.
	
	Part \eqref{180621:1b} follows from part~\eqref{180621:1a} by definition.
\end{proof}

\section{Stability Results}
\label{180807:3}

Recall that $R$ is a local ring. This section consists of stability results used in Section~\ref{180807:4} to prove Theorems~\ref{180711:2}--\ref{180711:4} from the introduction.

\begin{prop}
\label{160922.1}
	Let $X,J\in \mathcal{D}_{\text{b}}(R)$.
	Then 
        \begin{equation}
            \homciid_R(\rhom_R(X,J)) \leq \cifd_R(X) + \id_R(J)\label{160922.1.1}
        \end{equation}
        with equality holding when $J$ is a faithfully injective $R$-module, i.e., when $J$ is an injective $R$-module with $E_R(k)$ as a summand.
        In particular, $\homciid_R(X^{\vee})$ and $\cifd_R(X)$ are simultaneously finite.
\end{prop}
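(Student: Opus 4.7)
The plan is to prove \eqref{160922.1.1} by tracking injective and flat dimensions through a single quasi-deformation, and then to extract the equality case from a Matlis-style faithful duality.

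We may assume $\cifd_R(X)<\infty$ and $\id_R(J)<\infty$. Fix a quasi-deformation $R\xra{\vf} R'\xla{\tau} Q$ realising $\cifd_R(X)$, so $\fd_Q(X')<\infty$ with $X'=R'\lotimes_R X$; set $c=\pd_Q(R')$. Writing $J^{\vf}=\rhom_R(R',J)$, a chain of Hom--tensor adjointnesses (using that the $R$-action and the $Q$-action on $X'$ both factor through $R'$) gives
\[
\rhom_R(X,J)^{\vf} \;=\; \rhom_R(R',\rhom_R(X,J)) \;\simeq\; \rhom_{R'}(X',J^{\vf})
\]
in $\D(R')$, hence in $\D(Q)$ via $\tau$. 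Because $R'$ is faithfully flat over $R$ one has $\id_{R'}(J^{\vf})=\id_R(J)$, and the depth/Chouinard computation from the proof of Theorem~\ref{160817:4} Claim~1, now applied to $\tau$, yields the two deformation shifts
\[
\id_Q(W)=\id_{R'}(W)+c \qquad\text{and}\qquad \fd_Q(X')=\fd_{R'}(X')+c
\]
valid for any $R'$-complex $W$ (resp.\ $X'$) of finite injective (resp.\ flat) dimension over $R'$.

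The classical bicomplex bound (Hom of a flat module into an injective module is injective) gives $\id_{R'}(\rhom_{R'}(X',J^{\vf}))\leq\fd_{R'}(X')+\id_{R'}(J^{\vf})$. Adding $c$ to both sides and invoking the two deformation shifts above on $X'$, $J^{\vf}$, and $\rhom_{R'}(X',J^{\vf})$ yields
\[
\id_Q\bigl(\rhom_R(X,J)^{\vf}\bigr)\;\leq\;\fd_Q(X')+\id_R(J);
\]
subtracting $c=\pd_Q(R')$ and infimising over quasi-deformations proves \eqref{160922.1.1}.

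Now suppose $J$ is a faithful injective $R$-module, so $\id_R(J)=0$ and $\operatorname{Hom}_R(-,J)$ is a faithful functor on complexes. One further adjointness computation gives, for every $Q$-module $M$,
\[
\rhom_Q\bigl(M,\rhom_R(X,J)^{\vf}\bigr)\;\simeq\;\rhom_R\bigl(M\lotimes_Q X',\,J\bigr).
\]
Since $J$ is an injective module, the right side computes $\operatorname{Hom}_R(\operatorname{Tor}^Q_i(M,X'),J)$ in cohomological degree $i$; by faithfulness this vanishes exactly when $\operatorname{Tor}^Q_i(M,X')=0$. Setting $M=k_Q$ and using the local characterisations $\id_Q(Y)=\sup\{i:\operatorname{Ext}_Q^i(k_Q,Y)\neq 0\}$ and $\fd_Q(X')=\sup\{i:\operatorname{Tor}^Q_i(k_Q,X')\neq 0\}$ gives
\[
\id_Q\bigl(\rhom_R(X,J)^{\vf}\bigr)\;=\;\fd_Q(X')
\]
for \emph{every} quasi-deformation. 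Consequently the two sets of quasi-deformations making these invariants finite coincide, and so do their infima; this gives the equality in \eqref{160922.1.1}. The assertion on simultaneous finiteness of $\homciid_R(X^{\vee})$ and $\cifd_R(X)$ is the case $J=E_R(k)$.

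The main obstacle I expect is the careful bookkeeping of the deformation shift $+c$ across the various $R'$- and $Q$-dimension computations: each such equality is a re-run of the Chouinard argument of Theorem~\ref{160817:4} Claim~1 applied to a different $R'$-complex. The remaining ingredients -- the bicomplex bound and the Matlis-style faithful-Hom reversal -- are routine.
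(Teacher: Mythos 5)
Your adjointness computation $\rhom_R(X,J)^{\varphi}\simeq\rhom_{R'}(X',J^{\varphi})$ matches the paper, but both halves of the argument after that have genuine gaps.

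For the inequality, you bound $\id_{R'}(\rhom_{R'}(X',J^{\varphi}))$ by $\fd_{R'}(X')+\id_{R'}(J^{\varphi})$ and then shift by $c=\pd_Q(R')$ using the formula $\fd_Q(X')=\fd_{R'}(X')+c$. That shift is only valid when $\fd_{R'}(X')<\infty$, but the hypothesis only gives $\fd_Q(X')<\infty$, which is strictly weaker --- indeed, enlarging the class of complexes with finite $\fd_Q$ beyond those with finite $\fd_{R'}=\fd_R$ is the whole point of $\cifd$. Concretely, take $R$ a non-regular complete intersection, $X=k$, and the quasi-deformation $R\to\widehat R\leftarrow Q$ with $Q$ regular; then $X'=k$, $\fd_Q(k)=\dim Q<\infty$, but $\fd_{\widehat R}(k)=\infty$. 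Your bicomplex bound is vacuous and the shift formula false in exactly the interesting cases. The paper avoids this by citing a ``mixed'' change-of-rings bound from Avramov--Gasharov--Peeva, $\id_Q(\rhom_{R'}(X',J^{\varphi}))\leq\fd_Q(X')+\id_{R'}(J^{\varphi})$, which keeps the flat dimension over $Q$ throughout rather than passing through $R'$.

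For the equality, you invoke $\id_Q(Y)=\sup\{i:\Ext^i_Q(k_Q,Y)\neq0\}$ and $\fd_Q(X')=\sup\{i:\Tor^Q_i(k_Q,X')\neq0\}$. These Bass-type characterizations hold for finitely generated modules (or complexes in $\catdfb$), but fail for general bounded complexes: if $Q$ is a DVR with fraction field $K$, then $\id_Q(K)=0=\fd_Q(K)$ while $\Ext^i_Q(k_Q,K)=0=\Tor^Q_i(k_Q,K)$ for all $i$, so both suprema are $-\infty$. Since $X'$ is not finitely generated over $Q$ in general, your computation with $M=k_Q$ does not determine either invariant. The paper instead observes that the inequality from the cited theorem becomes an equality when $J^{\varphi}$ is faithfully injective, by tracking the proof of that reference; a self-contained replacement would need Chouinard's prime-by-prime formulas (as in the proof of Theorem~\ref{160817:4}), not a residue-field test.
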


\begin{proof}
    Assume without loss of generality that $\id_R(J)<\infty$. Consider an arbitrary quasi-deformation $R\xra{\varphi} R'\from Q$. There are isomorphisms
	\begin{align*}
            \rhom_R(X,J)^{\varphi} 
            	&= \rhom_R(R',\rhom_R(X,J))\\
				&\simeq \rhom_R(R'\lotimes_R X,J)\\
				&\simeq \rhom_{R}(R'\lotimes_{R'}(R'\lotimes_R X),J)\\
				&\simeq \rhom_{R'}(R'\lotimes_R X,\rhom_R(R',J))\\
            	&= \rhom_{R'}(X', J^{\varphi})
	\end{align*}
        where the first and third isomorphisms are by Hom-tensor adjointness, and the second isomorphism is by tensor-cancellation. 

	For the inequality~\eqref{160922.1.1}, assume without loss of generality for this paragraph that $\cifd_R(X)= \fd_Q(X') - \pd_Q(R')<\infty$. 
        This explains the second equality in the next display; the first equality is by the preceding paragraph.
        \begin{align*}
            \id_Q(\rhom_R(X,J)^{\varphi}) - \pd_Q(R')
            &= \id_Q(\rhom_{R'}(X', J^{\varphi})) - \pd_Q(R') \\
            &\leq \fd_Q(X') + \id_{R'}(J^{\varphi}) - \pd_Q(R')\\
            &= \cifd_R(X) + \id_{R'}(J^{\varphi})\\
            &= \cifd_R(X) + \id_R(J)
        \end{align*}
	The inequality is from~\cite[Theorem 4.1(I)]{avramov:cid}, and the last equality is by the proof of Claim 1 in Theorem~\ref{160817:4}.
	This verifies inequality~\eqref{160922.1.1}.

        For the remainder of the proof assume that $J$ is a faithfully injective $R$-module. In this case $J^{\varphi}$ is a faithfully injective $R'$-module. We need to show that inequality~\eqref{160922.1.1} is an equality. To this end assume without loss of generality that $\homciid_R(\rhom_R(X,J)) = \id_Q(\rhom_R(X,J)^{\varphi}) - \pd_Q(R') <\infty$. The proof of~\cite[Theorem 4.1(I)]{avramov:cid} shows that the inequality in the preceding display is actually an equality. The desired equality now follows as in the preceding paragraph. The statement about simultaneous finiteness is a direct consequence.
\end{proof}

The goal for the remainder of the section is to prove a version of the previous result with the roles of $\homciid$ and $\cifd$ reversed.
See Theorem~\ref{170915:4}.
This requires the following preparatory results.

\begin{lem}
    \label{180506:2}
    Let $Q\to R'$ be a ring homomorphism and let $L,Z\in\catdb(R')$. Then 
    \[
        \id_Q(\rhom_{R'}(L,Z)) \leq \id_Q(Z) + \pd_{R'}(L)
    \]
    with equality holding when $L$ is a non-zero free $R'$-module.
\end{lem}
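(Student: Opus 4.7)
The plan is to argue via a bounded free $R'$-resolution of $L$ combined with a stupid-truncation filtration, reducing the computation of $\id_Q(\rhom_{R'}(L,Z))$ to products of copies of $Z$.

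First, assume $n := \pd_{R'}(L) < \infty$ (else the inequality is vacuous) and fix a bounded $R'$-free resolution $F \xra{\simeq} L$ in $\catd(R')$ with $F_i = 0$ for $i > n$, so that $\rhom_{R'}(L,Z) \simeq \Hom[R']{F}{Z}$ in $\catd(Q)$. Writing $F_k \cong \bigoplus_{J_k} R'$, one has $\Hom[R']{F_k}{Z} \cong \prod_{J_k} Z$ as $Q$-complexes. Since products of $Q$-injective modules are $Q$-injective, a product of $Q$-injective resolutions of $Z$ is a $Q$-injective resolution of $\prod_{J_k} Z$; this yields $\id_Q(\Hom[R']{F_k}{Z}) \leq \id_Q(Z)$, with equality when $J_k$ is nonempty, by applying the fact that $\Ext[Q]{i}{M}{-}$ commutes with products in the second variable.

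Next, consider the stupid truncations $F^{\geq k}$ of $F$. Each short exact sequence of $R'$-complexes $0 \to F^{\geq k+1} \to F^{\geq k} \to F_k[k] \to 0$ remains exact after applying the contravariant functor $\Hom[R']{-}{Z}$ (since $F_k$ is projective), producing short exact sequences of $Q$-complexes
\[
0 \to \Hom[R']{F_k}{Z}[-k] \to \Hom[R']{F^{\geq k}}{Z} \to \Hom[R']{F^{\geq k+1}}{Z} \to 0.
\]
Using $\id_Q(X[-k]) = \id_Q(X) + k$ together with the triangle estimate $\id_Q(B) \leq \max(\id_Q(A), \id_Q(C))$ for a triangle $A \to B \to C \to$, a downward induction on $k$ (beginning from $k > n$, where $F^{\geq k} = 0$ and so $\id_Q = -\infty$) yields $\id_Q(\Hom[R']{F^{\geq k}}{Z}) \leq \id_Q(Z) + n$ for all $k$, since the leftmost term is bounded by $\id_Q(Z) + k \leq \id_Q(Z) + n$. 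Taking $k$ below the support of $F$ gives the desired inequality $\id_Q(\rhom_{R'}(L,Z)) \leq \id_Q(Z) + \pd_{R'}(L)$.

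For the equality statement, when $L = \bigoplus_J R'$ is a nonzero free $R'$-module one has $\pd_{R'}(L) = 0$ and $\rhom_{R'}(L,Z) \simeq \Hom[R']{L}{Z} \cong \prod_J Z$ with $J \neq \emptyset$. The identification $\Ext[Q]{i}{M}{\prod_J Z} \cong \prod_J \Ext[Q]{i}{M}{Z}$ shows that $\id_Q(\prod_J Z) = \id_Q(Z)$, so the inequality becomes equality. The main obstacle I expect is the bookkeeping with homological shifts in the filtration argument—applying $\id_Q(X[m]) = \id_Q(X) - m$ and the triangle estimate correctly at each step—and the ancillary fact that $L \in \catdb(R')$ with $\pd_{R'}(L) < \infty$ admits a bounded free $R'$-resolution in degrees $\leq \pd_{R'}(L)$; neither is deep but both warrant care.
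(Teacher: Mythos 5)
Your proof is correct and follows essentially the same route as the paper's: establish the equality in the non-zero free case via $\rhom_{R'}((R')^{(\Lambda)},Z)\simeq Z^{\Lambda}$ and $\id_Q(Z^{\Lambda})=\id_Q(Z)$, then handle the inequality by a hard (``stupid'') truncation argument. The paper phrases the induction as upward induction on $\pd_{R'}(L)$ and says only that ``the induction step follows by a standard hard truncation argument''; you carry out the same truncation bookkeeping explicitly, as a downward induction on the truncation index $k$, using the degreewise-split exact sequences $0\to\Hom[R']{F_k}{Z}\to\Hom[R']{F^{\geq k}}{Z}\to\Hom[R']{F^{\geq k+1}}{Z}\to 0$ and the triangle estimate for $\id$. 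The shift conventions you track ($\id_Q$ shifting by $\pm k$) are handled correctly. One small point you gloss over, which the paper glosses over in exactly the same way: for a general ring homomorphism $Q\to R'$ a bounded complex of finite projective dimension is quasi-isomorphic to a bounded complex of \emph{projective} modules, not necessarily free ones; since a projective is a summand of a free, $\Hom[R']{F_k}{Z}$ is a summand of a product of copies of $Z$ and the bound $\id_Q(\Hom[R']{F_k}{Z})\leq\id_Q(Z)$ still holds, and in the paper's intended application $R'$ is local so projective $=$ free anyway. This is not a gap so much as a shared tacit assumption.
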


\begin{proof}
    First we prove the equality when $L$ is a non-zero free $R'$-module, i.e., when $L \simeq (R')^{(\Lambda)}$ where $\Lambda\neq \emptyset$. In this case we have
    \[
        \rhom_{R'}(L,Z) \simeq \rhom_{R'}((R')^{(\Lambda)},Z) \simeq \rhom_{R'}(R',Z)^{\Lambda} \simeq Z^{\Lambda}
    \]
    and therefore
    \[
        \id_Q(\rhom_{R'}(L,Z)) = \id_Q( Z^{\Lambda}) = \id_Q(Z)
    \]
    because $\Lambda \neq 0$. This is the desired equality in this case.

    For the inequality assume without loss of generality that $\pd_{R'}(L)<\infty$ and $\id_Q(Z)<\infty$. 
	If $L \simeq 0$, then we are done. So assume $L \not\simeq 0$ and apply a shift if necessary to assume furthermore that $\inf(L) = 0$. 
	Now one proves the inequality by induction on $\pd_{R'}(L) \geq 0$. 
	The base case is covered by the preceding paragraph and the induction step follows by a standard hard truncation argument.
\end{proof}

\begin{prop}
    \label{180506:1}
    Let $P,Y\in \catdb(R)$. Then 
    \[
        \homciid_R(\rhom_R(P,Y))\leq \homciid_R(Y) + \pd_R(P)
    \]
    with equality holding when $P$ is a non-zero free $R$-module.
\end{prop}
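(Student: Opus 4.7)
The plan is to mirror the structure of Proposition~\ref{160922.1}, but with the roles of $\cifd$ and $\homciid$ swapped, using Lemma~\ref{180506:2} as the key input instead of \cite[Theorem 4.1(I)]{avramov:cid}. The engine is a Hom-tensor adjunction computation that rewrites $\rhom_R(P,Y)^{\varphi}$ as a derived Hom over $R'$ to which Lemma~\ref{180506:2} directly applies.

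First I would fix an arbitrary quasi-deformation $R\xra{\varphi}R'\xla{\tau}Q$ and derive the isomorphism
\[
\rhom_R(P,Y)^{\varphi}
= \rhom_R(R',\rhom_R(P,Y))
\simeq \rhom_R(R'\lotimes_R P,Y)
\simeq \rhom_{R'}(P',Y^{\varphi}),
\]
where $P' = R'\lotimes_R P$. The first isomorphism is Hom-tensor adjointness, and the second uses the tensor cancellation $R'\lotimes_R P\simeq R'\lotimes_{R'}(R'\lotimes_R P)$ followed by adjointness. Next I would observe that flatness of $\varphi$ gives $\pd_{R'}(P')\leq\pd_R(P)$, since an $R$-projective resolution of $P$ base-changes to an $R'$-projective resolution of $P'$.

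Assume $\homciid_R(Y)<\infty$ and $\pd_R(P)<\infty$; otherwise the inequality is vacuous. Choose a quasi-deformation with $\id_Q(Y^{\varphi})<\infty$. Applying Lemma~\ref{180506:2} to $L = P'$ and $Z = Y^{\varphi}$ yields
\[
\id_Q(\rhom_R(P,Y)^{\varphi}) - \pd_Q(R')
\leq \id_Q(Y^{\varphi}) + \pd_{R'}(P') - \pd_Q(R')
\leq \bigl(\id_Q(Y^{\varphi}) - \pd_Q(R')\bigr) + \pd_R(P).
\]
Taking the infimum over quasi-deformations that detect the finiteness of $\homciid_R(Y)$ (which exist by assumption and give the correct value by Theorem~\ref{160817:4}\eqref{160817:4a}) delivers the desired inequality.

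For the equality when $P\simeq R^{(\Lambda)}$ with $\Lambda\neq\emptyset$ (so $\pd_R(P)=0$), note $P' \simeq (R')^{(\Lambda)}$ is a non-zero free $R'$-module. Hence the inequality in Lemma~\ref{180506:2} is an equality for $L=P'$ and $Z=Y^{\varphi}$, so for every quasi-deformation
\[
\id_Q(\rhom_R(P,Y)^{\varphi}) - \pd_Q(R') = \id_Q(Y^{\varphi}) - \pd_Q(R').
\]
Thus finiteness and the numerical value pass back and forth between the two sides, giving $\homciid_R(\rhom_R(P,Y)) = \homciid_R(Y)$. No step here is serious; the only mild subtlety is ensuring that one only infimizes over quasi-deformations for which the relevant injective dimensions are finite, which is exactly what Theorem~\ref{160817:4}\eqref{160817:4a} legitimizes.
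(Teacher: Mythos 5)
Your proof is correct and takes essentially the same route as the paper: the paper's entire proof of this proposition is the one-line instruction ``argue as in the proof of Proposition~\ref{160922.1} using Lemma~\ref{180506:2} in place of~\cite[Theorem 4.1(I)]{avramov:cid},'' and your write-up is precisely that argument carried out in full, with the adjunction isomorphism $\rhom_R(P,Y)^\varphi\simeq\rhom_{R'}(P',Y^\varphi)$, the bound $\pd_{R'}(P')\leq\pd_R(P)$ from flatness, and the appeal to Theorem~\ref{160817:4}\eqref{160817:4a} all exactly as intended.
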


\begin{proof} 
        Argue as in the proof of Proposition~\ref{160922.1} using Lemma~\ref{180506:2} in place of~\cite[Theorem 4.1(I)]{avramov:cid}.
\end{proof}

Our next result complements \cite[Proposition 4.4(a)]{sather:cidfc}. It is the final key we need to prove Theorem~\ref{170915:4} below.
\begin{lem}
\label{170915:3}
Let $Y\in \catdb(R)$ be such that $\HH(Y)$ is $\m$-torsion. 
Let $K = K^R(\x)$ be the Koszul complex over $R$ on a sequence $\x = x_1,\dots,x_n \in \m$. Then 
\[
	\cifd_R(K\lotimes_R Y) = \cifd_R(Y) + n = \cifd_R(Y) + \pd_R(K).
\]
In particular, $\cifd_R(K\lotimes_R Y)$ and $\cifd_R(Y)$ are simultaneously finite.
\end{lem}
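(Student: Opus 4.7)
The plan is to prove the two required inequalities, with the hard direction reduced to a Chouinard-style depth formula for $\cifd$.

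The inequality $\cifd_R(K\lotimes_R Y)\leq\cifd_R(Y)+n$ follows by a routine filtration argument. For any quasi-deformation $R\xra{\vf}R'\xla{\tau}Q$ with $\fd_Q(Y')<\infty$, flatness of $R\to R'$ gives $(K\lotimes_R Y)'\simeq K\otimes_R Y'$ as a $Q$-complex, and the brutal-truncation filtration of $K$ presents $K\otimes_R Y'$ as a finite iterated extension of the shifts $\Sigma^i(Y')^{\binom{n}{i}}$ for $i=0,\ldots,n$. Iterating the triangle inequality for $\fd_Q$ yields $\fd_Q(K\otimes_R Y')\leq\fd_Q(Y')+n$; subtracting $\pd_Q(R')$ and taking the infimum over quasi-deformations gives the bound, and $\pd_R(K)=n$ is standard.

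For the reverse inequality, the plan is to invoke the Chouinard-style formula for $\cifd$ (the flat analogue of Theorem~\ref{160817:4}(a)): $\cifd_R(Z)=\sup_{\p}\{\depth(R_\p)-\depth_{R_\p}(Z_\p)\}$ whenever $\cifd_R(Z)<\infty$. Both $Y$ and $K\lotimes_R Y$ have $\m$-torsion homology, so the suprema collapse to $\p=\m$, giving $\cifd_R(Y)=\depth(R)-\depth_R(Y)$ and $\cifd_R(K\lotimes_R Y)=\depth(R)-\depth_R(K\lotimes_R Y)$. The Koszul self-duality $\hom{K}{R}\simeq\Sigma^{-n}K$ yields $K\lotimes_R Y\simeq\Sigma^n\rhom_R(K,Y)$, so by Hom-tensor adjointness $\rhom_R(k,K\lotimes_R Y)\simeq\Sigma^n\rhom_R(K\lotimes_R k,Y)$. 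Combined with $K\lotimes_R k\simeq\bigoplus_{i=0}^n\Sigma^i k^{\binom{n}{i}}$, this reads off $\sup\rhom_R(k,K\lotimes_R Y)=\sup\rhom_R(k,Y)+n$, i.e., $\depth_R(K\lotimes_R Y)=\depth_R(Y)-n$. Substituting produces $\cifd_R(K\lotimes_R Y)=\cifd_R(Y)+n$ whenever both sides are finite.

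The main obstacle is the simultaneous-finiteness assertion, that $\cifd_R(K\lotimes_R Y)<\infty$ forces $\cifd_R(Y)<\infty$. By Proposition~\ref{160922.1} (with $J=E_R(k)$) together with the isomorphism $(K\lotimes_R Y)^\vee\simeq\Sigma^{-n}(K\lotimes_R Y^\vee)$ coming from Koszul self-duality and Hom-tensor adjointness, this is equivalent to showing that finiteness of $\homciid_R(K\lotimes_R Y^\vee)$ implies finiteness of $\homciid_R(Y^\vee)$. Proposition~\ref{180506:1} supplies the converse direction unconditionally, but the forward direction is subtle because no unconditional two-of-three is known for $\homciid$ (cf.\ Lemma~\ref{180621:2} and Question~\ref{180814:2}). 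The $\m$-torsion hypothesis on $Y$ is what allows one to compare these quantities at the closed point via the Koszul depth calculation above, circumventing the missing two-of-three property.
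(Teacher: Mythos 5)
Your proposal has a genuine gap in the simultaneous-finiteness assertion, which you yourself flag as ``the main obstacle'' but do not actually resolve. The filtration argument gives $\cifd_R(K\lotimes_R Y)\leq\cifd_R(Y)+n$, hence the implication $\cifd_R(Y)<\infty\Rightarrow\cifd_R(K\lotimes_R Y)<\infty$. The converse implication is what is needed, and your final paragraph only restates the difficulty without supplying an argument. The Chouinard-type formula for $\cifd$ is only available \emph{after} finiteness is known, so the depth computation $\depth_R(K\lotimes_R Y)=\depth_R(Y)-n$ (which is correct and unconditional) cannot by itself force $\cifd_R(Y)<\infty$; writing $\cifd_R(Y)=\depth(R)-\depth_R(Y)$ presupposes exactly what needs to be shown. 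Likewise, the translation via Proposition~\ref{160922.1} to $\homciid_R((K\lotimes_R Y)^\vee)\simeq\homciid_R(\shift^{-n}(K\lotimes_R Y^\vee))$ is correct, but as you observe Proposition~\ref{180506:1} only yields the easy direction, and the assertion that ``the $\m$-torsion hypothesis \dots circumvents the missing two-of-three property'' is not an argument.

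The paper's proof is structurally different and closes this gap. Using~\cite[Theorem~F]{sather:cidfc}, it restricts attention to quasi-deformations $R\to R'\from Q$ with artinian closed fibre, over which $\cifd$ is computed as an infimum. For each such quasi-deformation one checks that $(K\lotimes_R Y)'\simeq K^Q(\mathbf y)\lotimes_Q Y'$, and the artinian-fibre hypothesis together with the $\m$-torsion hypothesis on $\HH(Y)$ forces $\supp_Q(Y')\subseteq\{\n\}$. Then~\cite[Lemma~3.4]{sather:afcc} gives the \emph{exact} equality $\fd_Q((K\lotimes_R Y)')=\fd_Q(Y')+n$ over every such $Q$, with $+\infty=+\infty$ allowed. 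Subtracting $\pd_Q(R')$ and taking the infimum yields the equality of $\cifd$'s and simultaneous finiteness in a single step, rather than proving two inequalities under a finiteness hypothesis and then separately arguing finiteness transfers. The support-theoretic lemma over $Q$ is precisely the ingredient your argument is missing; without it or an equivalent, the reverse finiteness implication remains open.
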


\begin{proof}
    Consider a quasi-deformation $R \xra{\varphi} R' \xla{\tau} Q$ such that the closed fibre of $\vf$ is artinian. 
    Note that~\cite[Theorem F]{sather:cidfc} shows that $\cifd_R(Y)$ is the infimum of the set of quantities $\fd_Q(Y') - \pd_Q(R')$ ranging through all such quasi-deformations; 
    and similarly for $\cifd_R(K\lotimes_R Y$).
    
    For $i = 1, \dots, n$, let $x'_i = \vf(x_i)$, and choose $y_i \in Q$ such that $\tau(y_i) = x_i'$. It is straightforward to verify that 
    \[
        (K \lotimes_R Y)' \simeq K^{R'}(\x')\lotimes_{R'} Y' \simeq K^Q(\y) \lotimes_Q Y'
    \]
    in $\catdb(Q)$. 
    Furthermore, from the assumption that $R'/ \m R'$ is artinian, it is straightforward to show that $\HH(Y')$ is $\m'$-torsion where $\m'$ is the maximal ideal of $R'$. 
    Hence $\HH(Y')$ is $\n$-torsion where $\n$ is the maximal ideal of $Q$. 
    From~\cite[Corollary 4.32]{yekutieli:hct} and~\cite[Proposition 5.4]{sather:scc} the ``small support'' 
    \[
    	\supp_Q(Y') = \{\p\in\spec[Q] \mid \kappa(\p)\lotimes_Q Y'\not\simeq 0\}
    \]
    is contained in $\{\n\}$. Thus \cite[Lemma 3.4]{sather:afcc} implies that
    \[
        \fd_Q( (K\lotimes_R Y)') = \fd_Q(K^Q(\y) \lotimes_Q Y') = \fd_Q(Y') + n.
    \]
    Subtract $\pd_Q(R')$ from the display and take an infimum to obtain the desired equality. 
    The statement about simultaneous finiteness follows directly.
\end{proof}

Next we have the main result for this section. We use it extensively in Section~\ref{180807:4}.

\begin{thm}
\label{170915:4}
    Let $X\in \catdfb(R)$. 
    \begin{enumerate}[\rm (a)]
	    \item Then $\cifd_R(X^{\vee}) \leq \homciid_R(X)$. \label{170915:4a}
	    \item If in addition $\HH_i(X)$ is Matlis reflexive (e.g., has finite length) for all $i$, then $ \cifd_R(X^{\vee})= \homciid_R(X)$; in particular, in this case $\homciid_R(X)$ and $\cifd_R(X^{\vee})$ are simultaneously finite.\label{170915:4b}
    \end{enumerate}
\end{thm}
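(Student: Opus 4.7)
The plan is to reduce Theorem~\ref{170915:4} to Proposition~\ref{160922.1} applied to the Matlis dual $X^{\vee}$. For $X\in\catdfb(R)$, each homology module of $X^{\vee}$ is a subquotient of a finite direct sum of copies of $E_R(k)$, hence artinian and in particular Matlis reflexive. Consequently $X^{\vee}\in\catdb(R)$ has Matlis reflexive homology, so the biduality map $X^{\vee}\to X^{\vee\vee\vee}$ is an isomorphism in $\catd(R)$. Applying Proposition~\ref{160922.1} to the complex $X^{\vee}$ with $J=E_R(k)$---which is faithfully injective with $\id_R(E_R(k))=0$---gives the equality
\[
\homciid_R(X^{\vee\vee})=\cifd_R(X^{\vee}).
\]

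Part~\eqref{170915:4b} is then almost immediate: the Matlis reflexivity of each $\HH_i(X)$ upgrades to an isomorphism $X\simeq X^{\vee\vee}$ in $\catd(R)$, which combined with the displayed identity yields $\homciid_R(X)=\cifd_R(X^{\vee})$, giving both the equality and the simultaneous finiteness statement.

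For part~\eqref{170915:4a}, use the standard biduality identification $X^{\vee\vee}\simeq\comp{R}\lotimes_R X$ valid for $X\in\catdfb(R)$, so it suffices to show $\homciid_R(\comp{R}\lotimes_R X)\le\homciid_R(X)$. Assuming the right side is finite, invoke Theorem~\ref{180522:1} to produce a quasi-deformation $R\xra{\vf}R'\xla{\tau}Q$ with $R'$ and $Q$ complete and closed fibre $R'/\m R'$ artinian and Gorenstein, realizing the infimum as $\homciid_R(X)=\id_Q(X^{\vf})-\pd_Q(R')$. The artinian fibre condition forces the $\m R'$-adic and $\m'$-adic topologies on $R'$ to agree, so the completeness of $R'$ yields $R'\otimes_R\comp{R}\simeq R'$. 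Using this together with Hom-adjointness, one shows that the natural map
\[
\rhom_R(R',X)\longrightarrow\rhom_R(R',\comp{R}\lotimes_R X)
\]
induced by the completion map is an isomorphism in $\catd(Q)$. Hence $(\comp{R}\lotimes_R X)^{\vf}\simeq X^{\vf}$, so the same quasi-deformation witnesses $\homciid_R(\comp{R}\lotimes_R X)\le\id_Q(X^{\vf})-\pd_Q(R')=\homciid_R(X)$, completing part~\eqref{170915:4a}.

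The main obstacle will be rigorously establishing the displayed isomorphism of derived Hom-complexes in part~\eqref{170915:4a}, or equivalently verifying that $\rhom_R(R',\cone(X\to\comp{R}\lotimes_R X))\simeq 0$ in $\catd(Q)$. While the tensor-vanishing $R'\otimes_R(\comp{R}/R)=0$ is immediate from $R'$ being its own $\m R'$-adic completion, upgrading this tensor-vanishing to the required Hom-vanishing---uniformly for complexes $X\in\catdfb(R)$---requires a further argument, and this is the delicate technical point of the proof.
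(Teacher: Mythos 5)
Your part (b) is essentially the paper's own argument: use Matlis reflexivity of $\HH_i(X)$ to get $X\simeq X^{\vee\vee}$, then feed $Y=X^{\vee}$ and $J=E_R(k)$ into Proposition~\ref{160922.1}. That part is fine.

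Part (a), however, has a fatal error. You reduce to showing $\homciid_R(\comp{R}\lotimes_R X)\le\homciid_R(X)$ and, after invoking Theorem~\ref{180522:1} to get a quasi-deformation with $R'$ complete and artinian closed fibre, you assert that ``the completeness of $R'$ yields $R'\otimes_R\comp{R}\simeq R'$.'' This is false. What completeness plus artinian fibre actually gives is $\varprojlim_n R'/\m^nR'\cong R'$, but the natural map $R'\otimes_R\comp{R}\to\varprojlim_n R'/\m^nR'$ is far from injective because tensor products do not commute with inverse limits for non-finite modules. Concretely, with $R=k[x]_{(x)}$ and $R'=\comp{R}=k[[x]]$, the module $\comp{R}/R$ is a nonzero torsion-free divisible $R$-module (a $k(x)$-vector space), so $R'\otimes_R(\comp{R}/R)\cong k((x))^{(I)}\neq 0$ for some (uncountable) index set $I$; equivalently, $R\to\comp{R}$ is not a ring epimorphism since it is not an isomorphism at the generic point. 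Thus $R'\otimes_R\comp{R}\not\simeq R'$, and the intended isomorphism $\rhom_R(R',X)\simeq\rhom_R(R',\comp{R}\lotimes_R X)$ has no foundation. (You already flag as ``delicate'' the passage from tensor-vanishing of $R'\otimes_R(\comp{R}/R)$ to the needed Hom-vanishing, but the tensor-vanishing itself is the first casualty.) The paper proves (a) by a quite different route: take the Koszul complex $K$ on a generating sequence for $\m$; since $X^{\vee}$ has $\m$-torsion homology, Lemma~\ref{170915:3} gives $\cifd_R(X^{\vee})=\cifd_R(K\lotimes_R X^{\vee})-\pd_R(K)$; Hom-evaluation rewrites $K\lotimes_R X^{\vee}\simeq\rhom_R(K,X)^{\vee}$; since $\rhom_R(K,X)$ has finite-length homology, part~(b) converts this to $\homciid_R(\rhom_R(K,X))-\pd_R(K)$; and Proposition~\ref{180506:1} bounds this above by $\homciid_R(X)$. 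That Koszul-complex mechanism is the missing idea in your proposal.
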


\begin{proof}
	\eqref{170915:4b} Assume that each $\HH_i(X)$ is Matlis reflexive. 
	Then the natural biduality map $\HH_i(X) \to \HH_i(X^{\vee\vee})$ is an isomorphism for all $i$. It follows that $X \simeq X^{\vee\vee}$, so
\[
    \homciid_R(X) = \homciid_R(X^{\vee\vee}) = \cifd_R(X^{\vee})
\] 
by Proposition~\ref{160922.1}.

\eqref{170915:4a} Let $K = K^R(\x)$ be the Koszul complex where $\x = x_1, \dots, x_n \in \m$ is a generating sequence for $\m$. 
Since $X\in \D^\text{f}_{\text{b}}(R)$, the dual $X^{\vee}\in\D_\text{b}(R)$ has $\m$-torsion homology. Thus Lemma \ref{170915:3} explains the first equality in the following display.
\begin{align*}
	\cifd_R(X^{\vee})  
		&= \cifd_R(K \lotimes_R (X^{\vee})) - \pd_R(K)\\
		&= \cifd_R(\rhom_R(K,X)^{\vee}) - \pd_R(K)\\
		&= \homciid_R(\rhom_R(K,X)) - \pd_R(K)\\
		&\leq \homciid_R(X)
\end{align*}
The second equality is by the Hom-evaluation isomorphism $\rhom_R(K,X)^{\vee} \simeq K\lotimes_R (X^{\vee})$. 
The third equality is by part~\eqref{170915:4b} as $\HH(\rhom_R(K,X))$ has finite length. 
The inequality is by Proposition~\ref{180506:1}.
\end{proof}

\section{Complete Intersection and Cohen-Macaulay Properties}
\label{180807:4}

Recall that $R$ is a local ring. In this section we prove Theorems~\ref{180711:2}--\ref{180711:4} from the introduction starting with Theorem~\ref{180711:2}. Recall that the term ``formal complete intersection'' is defined in the introduction.

\begin{thm}
\label{160922.3}
        The following conditions are equivalent.
        \begin{enumerate}[\rm (i)]
            \item $\homciid_R(k)<\infty$. 
            \item $R$ is a formal complete intersection.
            \item For every $Y\in\D_{\text{b}}(R)$, we have $\homciid_R(Y)<\infty$. 
        \end{enumerate}
\end{thm}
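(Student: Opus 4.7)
The plan is to prove the chain (iii) $\Rightarrow$ (i) $\Rightarrow$ (ii) $\Rightarrow$ (iii). The implication (iii) $\Rightarrow$ (i) is immediate since $k \in \D_{\text{b}}(R)$.

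For (ii) $\Rightarrow$ (iii), I would exhibit a \emph{single} quasi-deformation that witnesses finiteness of $\homciid_R(Y)$ uniformly over all $Y \in \D_{\text{b}}(R)$. If $\comp{R} \cong Q/\fa$ with $Q$ regular local and $\fa$ generated by a $Q$-regular sequence, then the diagram $R \xra{\varphi} \comp{R} \xla{\tau} Q$, where $\varphi$ is the completion map, is a quasi-deformation. For $Y \in \D_{\text{b}}(R)$, Fact~\ref{180505:1} gives $\pd_R(\comp{R}) \leq \dim(R) < \infty$, so $Y^{\varphi} = \rhom_R(\comp{R},Y) \in \D_{\text{b}}(R)$ and hence lies in $\D_{\text{b}}(Q)$. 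Since $Q$ is regular, every object of $\D_{\text{b}}(Q)$ has finite injective dimension, so $\id_Q(Y^{\varphi}) < \infty$ and thus $\homciid_R(Y) < \infty$.

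For (i) $\Rightarrow$ (ii), the idea is to transport finiteness of $\homciid_R(k)$ to finiteness of $\cifd_R(k)$ via Matlis duality, and then invoke the existing characterization of formal complete intersections. Since $k$ has finite length, it is Matlis reflexive, and $k^{\vee} = \rhom_R(k,E_R(k)) \simeq \hom{k}{E_R(k)} \cong k$. Applying Theorem~\ref{170915:4}\eqref{170915:4b} with $X = k$ yields
\[
\cifd_R(k) \;=\; \cifd_R(k^{\vee}) \;=\; \homciid_R(k) \;<\; \infty.
\]
Since $k$ is finitely generated, $\cifd_R(k) = \cidim_R(k) < \infty$, so the Avramov--Gasharov--Peeva characterization of formal complete intersections via the residue field~\cite{avramov:cid} delivers that $R$ is a formal complete intersection.

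The substantive content has already been absorbed into earlier results, so no new computation is needed; the only points worth double-checking are the identification $k \simeq k^{\vee}$ (immediate from the definition of the injective hull) and the fact that a single quasi-deformation $R \to \comp{R} \from Q$ suffices for (ii) $\Rightarrow$ (iii). The latter is precisely the payoff of defining the invariant through $\rhom$ rather than $\lotimes$: the complex $\comp{R}$ in the source position is fixed, and flatness of $\comp{R}$ over $R$ together with regularity of $Q$ controls $\id_Q(\rhom_R(\comp{R},Y))$ for every bounded $Y$ at once, which is exactly the obstruction one faces in the tensor-based theory of~\cite{sahandi:hfd}.
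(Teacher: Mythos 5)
Your proof is correct and follows essentially the same route as the paper: the chain (iii) $\Rightarrow$ (i) $\Rightarrow$ (ii) $\Rightarrow$ (iii), using Theorem~\ref{170915:4}\eqref{170915:4b} together with $k^{\vee} \cong k$ and the Avramov--Gasharov--Peeva characterization for (i) $\Rightarrow$ (ii), and the completion quasi-deformation with Fact~\ref{180505:1} and regularity of $Q$ for (ii) $\Rightarrow$ (iii). Your closing observation about why the $\rhom$-based definition makes (ii) $\Rightarrow$ (iii) painless --- a fixed flat source module controls boundedness --- is a nice articulation of the point the paper leaves implicit.
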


\begin{proof}
	The implication (iii) $\Rightarrow$ (i) is trivial.

	(i) $\Rightarrow$ (ii)
	Assume that $\homciid_R(k)<\infty$. Then Theorem~\ref{170915:4}\eqref{170915:4b} explains the third equality in the next display while the second equality is from the isomorphisms $k^{\vee} \simeq \hom{k}{E_R(k)} \cong k$. 
	\[
		\cidim(k) = \cifd_R(k) = \cifd_R(k^{\vee}) = \homciid_R(k) <\infty
	\]
	By \cite[Theorem 1.3]{avramov:cid} it follows that $R$ is a formal complete intersection.
	
	(ii) $\Rightarrow$ (iii) Assume that $R$ is a formal complete intersection and $Y\in \D_{\text{b}}(R)$. 
	By defintion $\widehat{R} \cong Q / I$ where $Q$ is a regular local ring and $I$ is generated by a $Q$-regular sequence. 
	Therefore the natural maps $R \xra{\varphi} \widehat{R} \from Q$ form a quasi-deformation. 
	By Fact~\ref{180505:1} the condition $Y\in\catdb(R)$ implies $Y^{\varphi} \in \catdb(R')$,
	hence $Y^{\varphi}\in \D_\text{b}(Q)$. 
	As $Q$ is regular, we have that $\id_Q(Y^{\varphi})<\infty$ by \cite[Proposition 3.1]{avramov:hdouc}. 
	Thus $\homciid_R(Y)<\infty$.
\end{proof}

Here is the example promised before Theorem~\ref{160817:4}.

\begin{ex}
\label{180913:1}
	If $R$ is Gorenstein and not a formal complete intersection, then $\gid_R(k) = \dim(R) <\infty = \homciid_R(k)$.  On the other hand, if $R$ is a formal complete intersection but not regular, then $\homciid_R(k) = \dim(R) < \infty = \id_R(k)$.
\end{ex}

The following contains Theorem~\ref{180711:3} from the introduction.

\begin{thm}
	\label{170907:2}
	Let $X\in\D_{\text{b}}(R)$ be such that $\mu^i_R(X) < \infty$ for all $i$, e.g., $X\in\catdfb(R)$.
	If $\homciid_R(X) < \infty$, then the sequence of Bass numbers $\{\mu^i_R(X)\mid i\geq-\sup(X)\}$ is bounded above by a polynomial in $i$.
\end{thm}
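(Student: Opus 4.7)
The plan is to reduce the polynomial-growth question for Bass numbers of $X$ to the corresponding question for Betti numbers of the Matlis dual $X^{\vee}$, then invoke the known polynomial-growth result for modules of finite complete intersection flat dimension.

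First, we identify the Bass numbers of $X$ with the Betti numbers of $X^{\vee}$. The hypothesis $\mu_R^i(X)<\infty$ guarantees that $\HH^i(\rhom_R(k,X))$ is a finite-dimensional $k$-vector space for each $i$, hence Matlis self-dual. Combining this with the Hom-evaluation isomorphism
\[
    k\lotimes_R X^{\vee}=k\lotimes_R\rhom_R(X,E_R(k))\simeq \rhom_R(\rhom_R(k,X),E_R(k))
\]
yields $\beta^R_i(X^{\vee})=\mu_R^i(X)$ for every $i\geq -\sup(X)$. So it suffices to polynomially bound $\beta^R_i(X^{\vee})$ in $i$.

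Next, we establish $\cifd_R(X^{\vee})<\infty$. In the primary case $X\in\catdfb(R)$, this is immediate from Theorem~\ref{170915:4}\eqref{170915:4a}. Under the more general hypothesis $\mu_R^i(X)<\infty$, we adapt the Koszul-truncation argument from the proof of Theorem~\ref{170915:4}: the key complex $\rhom_R(K,X)$ for $K$ the Koszul complex on a generating sequence of $\m$ still has finite-length cohomology in each degree, so the Matlis-reflexivity step in that proof applies and yields $\cifd_R(X^{\vee})\leq\homciid_R(X)<\infty$.

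Finally, we invoke the polynomial-growth theorem for Betti numbers of modules of finite complete intersection flat dimension, due to Avramov--Gasharov--Peeva~\cite{avramov:cid} for finitely generated modules and extended by Sahandi--Sharif--Yassemi~\cite{sahandi:hfd} to the non-finitely-generated setting. The underlying mechanism passes to a quasi-deformation $R\to R'\from Q$ detecting $\cifd_R(X^{\vee})<\infty$ and applies Eisenbud's cohomology operators for the complete intersection $Q\twoheadrightarrow R'$, producing polynomial growth of degree at most $\cifd_R(X^{\vee})$. Combined with the identification from the first step, this yields the desired polynomial bound. The main obstacle will be the extension of Theorem~\ref{170915:4} beyond $\catdfb(R)$ in the second step: one must carefully verify that the finiteness-of-Bass-numbers hypothesis is enough to drive the Koszul/Hom-evaluation reduction there, rather than the stronger Matlis-reflexivity condition appearing in Theorem~\ref{170915:4}\eqref{170915:4b}.
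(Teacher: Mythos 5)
Your proof follows the paper's argument step for step: Matlis dualize, deduce $\cifd_R(X^\vee)<\infty$ from Theorem~\ref{170915:4}\eqref{170915:4a}, identify $\beta^R_i(X^\vee)=\mu^i_R(X)$, and invoke the Avramov--Gasharov--Peeva polynomial-growth mechanism. Your concern about pushing Theorem~\ref{170915:4}\eqref{170915:4a} beyond $\catdfb(R)$ is legitimate and in fact not addressed in the paper's own proof, which cites that theorem directly even though it is stated only for $X\in\catdfb(R)$; be aware, though, that your proposed fix handles the finite-length condition on $\HH(\rhom_R(K,X))$ but not the other hypothesis the argument consumes, namely that $\HH(X^\vee)$ is $\m$-torsion, which is what Lemma~\ref{170915:3} requires.
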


\begin{proof}
	Assume $\homciid_R(X)<\infty$. 
	Theorem~\ref{170915:4}\eqref{170915:4a} implies $\cifd_R(X^{\vee})<\infty$.
	It is straightforward to show that $\beta_i^R(X^{\vee}) = \mu^i_R(X) < \infty$.
	As in the proof of \cite[Lemma 1.5]{avramov:cid} the sequence of Betti numbers $\{\beta_i^R(X^{\vee}) \mid i\geq \inf(X{^\vee}) = -\sup(X)\}$ is bounded above by a polynomial in $i$,
	so we have the desired conclusion.
\end{proof}

The following is Theoerem~\ref{180711:4} from the introduction.

\begin{thm}
\label{170915:4.2}
	Assume that $R$ a has non-zero finitely generated $R$-module $M$ such that $\homciid_R(M)<\infty$.
	Then $R$ is Cohen-Macaulay.
\end{thm}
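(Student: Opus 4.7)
The plan is to reduce $R$'s Cohen-Macaulay property to that of the ring $Q$ in a carefully chosen quasi-deformation, leveraging Matlis duality and the classical Bass conjecture (Peskine-Szpiro, Roberts). First I would apply Theorem~\ref{170915:4}\eqref{170915:4a} to convert the hypothesis $\homciid_R(M)<\infty$ into $\cifd_R(M^{\vee})<\infty$, where $M^{\vee}=\rhom_R(M,E_R(k))$ is a non-zero artinian $R$-module (non-zero since $M\neq 0$ is finitely generated and Matlis duality is faithful on such modules).

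Next, I would invoke \cite[Theorem~F]{sather:cidfc} (the $\cifd$-analog of Theorem~\ref{180522:1}) to obtain a quasi-deformation $R\xra{\varphi} R'\xla{\tau} Q$ such that $R'$ and $Q$ are complete, the closed fiber $R'/\m R'$ is artinian and Gorenstein, and $\fd_Q(N)<\infty$ for $N=R'\otimes_R M^{\vee}$. Faithful flatness of $\varphi$ gives $N\neq 0$, and a standard base-change argument (using that the artinian Gorenstein closed fiber makes $R'\otimes_R E_R(k)$ an injective artinian $R'$-module) shows $N$ is artinian over $R'$; since $\tau$ is surjective, every $R'$-submodule of $N$ is a $Q$-submodule, so $N$ is artinian over $Q$. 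Because $Q$ is complete, Matlis duality over $Q$ yields a non-zero finitely generated $Q$-module $N^{*}=\Hom[Q]{N}{E_Q(Q/\n)}$, and the standard Matlis interchange of flat and injective dimensions gives $\id_Q(N^{*})=\fd_Q(N)<\infty$. Applying the Bass conjecture to the non-zero finitely generated $Q$-module $N^{*}$ of finite injective dimension yields that $Q$ is Cohen-Macaulay.

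To close the argument, the closed fiber $R'/\m R'$ being artinian gives $\dim(R')=\dim(R)$ and $\depth(R')=\depth(R)$, while the codimension-$c$ deformation $\tau$ with $c=\pd_Q(R')$ gives $\dim(Q)=\dim(R')+c$ and $\depth(Q)=\depth(R')+c$. Hence
\[
\dim(R)-\depth(R)=\dim(Q)-\depth(Q)=0,
\]
so $R$ is Cohen-Macaulay. The main obstacle will be the Matlis duality bookkeeping: verifying that $N$ is genuinely artinian as a $Q$-module and that the $\fd\leftrightarrow\id$ interchange under Matlis duality applies to this non-finitely generated $N$; both steps rely on the artinian Gorenstein structure of the closed fiber together with standard Matlis duality theory over complete local rings.
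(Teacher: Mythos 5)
Your proposal is correct and follows essentially the same route as the paper: convert $\homciid_R(M)<\infty$ to $\cifd_R(M^{\vee})<\infty$ via Theorem~\ref{170915:4}\eqref{170915:4a}, invoke \cite[Theorem~F]{sather:cidfc} for a complete quasi-deformation with artinian Gorenstein closed fibre, use Matlis duality to manufacture a non-zero finitely generated $Q$-module of finite injective dimension, apply Bass' conjecture to $Q$, and descend the Cohen-Macaulay property to $R$. The only (cosmetic) difference is in the Matlis bookkeeping: you dualize $N=(M^{\vee})'$ directly over $Q$ to get a finitely generated $N^*$, whereas the paper first identifies $(M^{\vee})'\simeq\rhom_{R'}(M',E_{R'}(l))$ via \cite[Theorem~1]{foxby:imufbc} and then applies biduality to $M'$ over $R'$ together with \cite[Theorem~4.1(I)]{avramov:hdouc}; the two modules produced are isomorphic.
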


\begin{proof}
	Theorem~\ref{170915:4}\eqref{170915:4a} implies that $\cifd_R(M^{\vee})<\infty$. 
	Therefore by~\cite[Theorem F]{sather:cidfc} there is a quasi-deformation $R\xra{\varphi} R' \from Q$ such that $R'$ and $Q$ are complete, and the closed fibre of $\varphi$ is artinian and Gorenstein, and $\fd_Q((M^{\vee})')<\infty$. 
	The restrictions on $\varphi$ imply that $E_R(k)' \simeq E_{R'}(l)$ by~\cite[Theorem 1]{foxby:imufbc}, where $l$ is the residue field of $R'$; so
	\begin{align*}
		(M^{\vee})'
			&= R'\lotimes_R \Rhom{M}{E_R(k)}\\
			&\simeq \rhom_{R'}(R'\lotimes_R M, R'\lotimes_R E_R(k))\\
			&\simeq \Rhom[R']{M'}{E_{R'}(l)}.
	\end{align*}
	Thus $\fd_Q( \Rhom[R']{M'}{E_{R'}(l)})<\infty$. 
	Notice that $M'$ is a finitely generated module over the complete local ring $R'$.
	Therefore it is Matlis reflexive over $R'$.
	From~\cite[Theorem 4.1(I)]{avramov:hdouc} we have
	\[
		\id_Q(M') = \id_Q(\Rhom[R']{\Rhom[R']{M'}{E_{R'}(l)}}{E_{R'}(l)}) < \infty.
	\]
	In conclusion $Q$ has a non-zero finitely generated module $M'$ of finite injective dimension.
	Thus $Q$ is Cohen-Macaulay by Bass' conjecture, which implies that $R'$ and $R$ are Cohen-Macaulay.
\end{proof}

\appendix

\section{Derived Functors}
\label{180725:1}

The first result in this appendix is for use in Theorem~\ref{180522:1} and Proposition~\ref{prop171022d}. 

\begin{lem}
\label{160901.3}
	Let $R,S,\widetilde{R},\widetilde{S}$ be commutative noetherian rings (not necessarily local) and consider the following commutative diagram of ring homomorphisms
	\begin{center}
	\begin{tikzpicture}
		\matrix[matrix of math nodes,row sep=2em, column sep=2em, text height=1.5ex, text depth=0.25ex]
		{
			|[name=R]| R & |[name=R2]| \widetilde{R}\\
			|[name=S]| S & |[name=S2]| \widetilde{S}\\
		};
		\draw[->, font=\scriptsize]
			(R) edge node[above]{$\beta$} (R2)
			(R) edge node[left]{$\tau$} (S)
			(S) edge node[above]{$\gamma$} (S2)
			(R2) edge node[right]{$\widetilde{\tau}$} (S2);
	\end{tikzpicture}
	\end{center}
	such that $\widetilde{S} \cong S\otimes_R \widetilde{R}$ and $\tor_{i}^R(S,\widetilde{R}) = 0$ for all $i>0$. Let $Y\in \D_{-}(S)$. Then 
        \[
            \id_{\widetilde{R}}(\rhom_{S}(\widetilde{S},Y))\leq \id_R(Y).
        \]
\end{lem}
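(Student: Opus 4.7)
The strategy is to use the Tor-vanishing hypothesis to replace $\widetilde{S}$ by the derived tensor product $S\lotimes_{R}\widetilde{R}$, apply Hom-tensor adjointness to rewrite $\rhom_{S}(\widetilde{S},Y)$ as $\rhom_{R}(\widetilde{R},Y)$, and then invoke the classical fact that coinduction along a ring homomorphism preserves injectivity of a resolution.

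First, because $\widetilde{S}\cong S\otimes_{R}\widetilde{R}$ and $\tor_{i}^{R}(S,\widetilde{R})=0$ for every $i>0$, there is an isomorphism $\widetilde{S}\simeq S\lotimes_{R}\widetilde{R}$. Hom-tensor adjointness together with tensor cancellation then provides the following chain of isomorphisms, which I would argue all live in $\D(\widetilde{R})$ with respect to the natural $\widetilde{R}$-action on the first argument:
\begin{align*}
\rhom_{S}(\widetilde{S},Y)
&\simeq\rhom_{S}(S\lotimes_{R}\widetilde{R},Y)\\
&\simeq\rhom_{R}(\widetilde{R},\rhom_{S}(S,Y))\\
&\simeq\rhom_{R}(\widetilde{R},Y).
\end{align*}

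Second, I would reduce to the standard inequality $\id_{\widetilde{R}}(\rhom_{R}(\widetilde{R},Y))\leq\id_{R}(Y)$, valid for any ring homomorphism $\beta\colon R\to\widetilde{R}$ and any $Y\in\D_{-}(R)$. Assuming $\id_{R}(Y)=n<\infty$, I would choose an injective resolution $Y\xra{\simeq}I$ over $R$ with $I_{i}=0$ for $i<-n$. Then $\rhom_{R}(\widetilde{R},Y)\simeq\hom{\widetilde{R}}{I}$, and each module $\hom{\widetilde{R}}{I_{i}}$ is injective as an $\widetilde{R}$-module via the functorial adjunction isomorphism $\Hom[\widetilde{R}]{-}{\hom{\widetilde{R}}{I_{i}}}\cong\hom{-}{I_{i}}$, whose right-hand side is exact in the first variable. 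Hence $\id_{\widetilde{R}}(\hom{\widetilde{R}}{I})\leq n$, and combining this with the first step yields the desired bound.

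The only genuinely delicate point is the verification that the adjunction isomorphism in the first step is compatible with the correct $\widetilde{R}$-structure, i.e.\ that it is an isomorphism in $\D(\widetilde{R})$ and not merely in $\D(R)$; once this bookkeeping is in place, everything else is formal, and no further noetherian or local hypotheses on the rings are needed.
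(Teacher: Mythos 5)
Your overall strategy is the natural one, and the second half of your argument---that $\id_{\widetilde R}(\rhom_R(\widetilde R,Y))\le\id_R(Y)$, via coinduction of a bounded $R$-injective resolution---is correct. The gap is in the first half, precisely at the point you dismiss as ``bookkeeping.'' The module-level adjunction isomorphism $\hom_S(\widetilde S,-)\cong\hom_R(\widetilde R,\mathrm{Res}(-))$ is indeed $\widetilde R$-linear, so the derived functor of the $\widetilde R$-linear functor $\hom_S(\widetilde S,-)\colon \mathrm{Mod}(S)\to\mathrm{Mod}(\widetilde R)$ is $\rhom_S(\widetilde S,-)$ with its $\widetilde R$-structure. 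But your chain of isomorphisms identifies this with $\rhom_R(\widetilde R,-)\circ\mathrm{Res}$ by resolving $\widetilde R$ projectively over $R$, and a projective resolution $P\xra{\simeq}\widetilde R$ over $R$ carries no $\widetilde R$-module structure, so $\hom_R(P,Y)$ is only an $R$-complex. Alternatively, if you try to resolve on the other side and take a semi-injective $S$-resolution $J\xra{\simeq}Y$, then $\hom_S(\widetilde S,J)\cong\hom_R(\widetilde R,J)$ is $\widetilde R$-linear, but $J$ need not be semi-injective over $R$, so there is no reason a priori that $\hom_R(\widetilde R,J)$ represents $\rhom_R(\widetilde R,Y)$. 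The paper explicitly flags this issue (Remark~\ref{180814:1}: the analogous inequalities ``do not follow directly from simple Hom-tensor adjointness arguments''), which is exactly the argument you propose.

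The missing content can be supplied, but it is a genuine argument, not bookkeeping. One route: use the Tor-vanishing a second time, via the change-of-rings spectral sequence $\Ext_S^p(\tor_q^R(S,\widetilde R),E)\Rightarrow\Ext_R^{p+q}(\widetilde R,E)$, to conclude that every injective $S$-module $E$ satisfies $\Ext_R^{>0}(\widetilde R,E)=0$. This shows a bounded-above injective $S$-complex $J$ is termwise $\hom_R(\widetilde R,-)$-acyclic, so $\hom_R(\widetilde R,J)\simeq\rhom_R(\widetilde R,Y)$ in $\D(\widetilde R)$, closing the gap. The paper takes a different route: it factors $\beta\colon R\to\widetilde R$ through a semifree DG $R$-algebra $\overline R\xra{\simeq}\widetilde R$ (Avramov's factorization), sets $\overline S=S\otimes_R\overline R$, and carries out the adjunction at the DG level where the algebra structure on the resolution supplies the correct $\widetilde R$-action throughout, invoking \cite[Theorem~6.10(i)]{felix:rht} for the key comparison of Hom complexes. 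Either fix works, but your write-up as it stands stops just short of the actual content of the lemma.
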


\begin{proof}
	Replace $\widetilde{S}$ with $S \otimes_R \widetilde{R}$ if necessary to assume that $\widetilde{S} = S \otimes_R \widetilde{R}$. 
	Using~\cite[Proposition 2.1.10]{avramov:ifr} we factor $\beta$ as $R \xra{\iota} \overline{R} \xra{\varepsilon \,\, \simeq} \widetilde{R}$
	where $\overline{R}$ is a commutative differential graded (DG) $R$-algebra such that $\overline{R}_i$ is free over $R$ for all $i$, with $\overline{R}_{<0} = 0$, and $\iota$, $\varepsilon$ are DG-algebra homomorphisms such that $\varepsilon$ is a quasiisomorphism. 
	See~\cite{avramov:ifr,felix:rht} for relevant background on DG homological algebra.

	Set $\overline{S} = S\otimes_R \overline{R}$ which is a DG $S$-algebra such that $\overline{S}_i$ is free over $S$ for all $i$, with $\overline{S}_{<0} = 0$. 
	This gives the following commutative diagram of DG-algebra morphisms.
	\begin{center}
	\begin{tikzpicture}
		\matrix[matrix of math nodes,row sep=2em, column sep=2em, text height=1.5ex, text depth=0.25ex]
		{
			& & |[name=R3]| \overline{R}\\
			|[name=R]| R & & & & |[name=R2]| \widetilde{R}\\
			|[name=S]| S & & & & |[name=S2]| \widetilde{S}\\
			& & |[name=S3]| \overline{S}\\
		};
		\draw[->, font=\scriptsize]
		(R) edge node[below right]{$\beta$} (R2)
		(R) edge node[above]{$\iota$}(R3)
		(R3) edge node[above]{$\varepsilon$} node[left]{$\simeq$} (R2)
			(R) edge node[left]{$\tau$} (S)
			(S) edge node[above right]{$\gamma$} (S2)
			(S) edge node[below left]{$S\otimes_R \iota$} (S3)
			(S3) edge node[below right]{$\overline{S}\otimes_{\overline{R}} \varepsilon$} (S2)
			(R3) edge[out=250, in=110] node[left]{$\overline{\tau}$} (S3)
			(R2) edge node[right]{$\widetilde{\tau}$} (S2);
	\end{tikzpicture}
	\end{center}
	Claim: The DG-algebra morphisms $\overline{S}\otimes_{\overline{R}} \varepsilon$ and $S\otimes_R \varepsilon$ are quasiisomorphisms.
	Indeed since $\varepsilon$ is a quasiisomorphism, $\cone(\varepsilon)$ is exact. Because of the isomorphisms
	\begin{align*}
		\cone(\overline{S} \otimes_{\overline{R}} \varepsilon) &\cong \overline{S} \otimes_{\overline{R}} \cone(\varepsilon) = (S\otimes_R \overline{R}) \otimes_{\overline{R}} \cone(\varepsilon)\\
		&\cong S\otimes_R \cone(\varepsilon) \cong \cone(S \otimes_R \varepsilon)
	\end{align*}
	it suffices to show that $S \otimes_R \cone(\varepsilon)$ is exact. Observe that
	\[
		\cone(\varepsilon)\cong \cdots \to \overline{R}_2 \to \overline{R}_1 \to \overline{R}_0 \to \widetilde{R} \to 0.
	\]
	Since each $\overline{R}_i$ is free, $\tor^R_{\geq 1}(S,\overline{R}_i) = 0$. 
	Also, by assumption, we have that $\tor^R_{\geq 1}(S,\widetilde{R}) = 0$. 
	Since $\cone(\varepsilon)$ is exact, $S\otimes_R \cone(\varepsilon)$ is exact as desired.
	
	Without loss of generality assume that $\id_R(Y) < \infty$. 
	The properties of $\overline{S}$ and $\overline{R}$ listed above explain the first and last steps in the following sequence over $\overline{R}$.%
	\begin{align*}
		\rhom_S(\overline{S},Y)
			&\simeq \hom[S]{\overline{S}}{Y}\\
			&= \hom[S]{S\otimes_R \overline{R}}{Y}\\
			&\cong \hom[R]{\overline{R}}{\hom[S]{S}{Y}}\\
			&\cong \hom[R]{\overline{R}}{Y}\\
			&\simeq \rhom_R(\overline{R},Y)
	\end{align*}
	The second step is by definition, the third step is Hom-tensor adjointness, and the fourth step is induced by Hom cancellation. 
	Because $\id_R(Y) < \infty$ and $\overline{R}\in\D_\text{b}(R)$, it follows that $\rhom_S(\overline{S},Y) \simeq \rhom_R(\overline{R},Y)\in \D_\text{b}(R)$.

	The previous display gives the first isomorhpism in $\D(\overline{R})$ in the following display for each $N\in \D_\text{b}(\overline{R})$. The second and third isomorphisms are by Hom-tensor adjointness and tensor cancellation respectively.
	\begin{align}
		\rhom_{\overline{R}}(N,\rhom_S(\overline{S},Y))
			&\simeq \rhom_{\overline{R}}(N,\rhom_R(\overline{R},Y)) \notag \\
			&\simeq \rhom_R(\overline{R}\lotimes_{\overline{R}} N, Y) \notag \\
			&\simeq \rhom_R(N,Y) \label{160901.3-1}
	\end{align}
	
	Let $\rhom_S(\overline{S},Y) \xra{\simeq} I$ be a semiinjective resolution over $\overline{S}$. Then 
	\[
		\rhom_{\overline{S}}(\widetilde{S}, \rhom_S(\overline{S}, Y)) \simeq \hom[\overline{S}]{\widetilde{S}}{I}.
	\]
	Let $W$ be an $\widetilde{R}$-module.
	Let $\overline{G} \xra{\simeq} W$ be a semifree resolution over $\overline{R}$, and let $\widetilde{G} \xra{\simeq} W$ be a semifree resolution over $\widetilde{R}$ (and hence a quasiisomorphism over $\overline{R}$). Consider the following commutative diagram
	\begin{center}
	\begin{tikzpicture}
		\matrix[matrix of math nodes,row sep=2em, column sep=2em, text height=1.5ex, text depth=0.25ex]
		{
			|[name=S2]| \hom[\overline{S}]{\widetilde{S}}{I} & |[name=R2]| \hom[\overline{S}]{\overline{S}}{I}\\
			& |[name=R]| I\\
		};
		\draw[->, font=\scriptsize]
			(R2) edge node[right]{$\cong$} (R)
			(S2) edge (R)
			(S2) edge node[above]{$\simeq$} (R2);
	\end{tikzpicture}
	\end{center}
	where the vertical map is an isomorphism by Hom cancellation and the horizontal map is a quasiisomorphism because $\overline{S} \to \widetilde{S}$ is a quasiisomorphim and $I$ is semiinjective over $\overline{S}$. 
	Hence by composition $\hom[\overline{S}]{\widetilde{S}}{I} \to I$ is a quasiisomorphism.
	By~\cite[Theorem 6.10(i)]{felix:rht} we have a natural quasiisomorphism 
	\[
		\hom[\overline{R}]{\overline{G}}{I} \xra{\simeq} \hom[\widetilde{R}]{\widetilde{G}}{\hom[\overline{S}]{\widetilde{S}}{I}}.
	\]
	This explains the second isomorphism in the next display. 
	The first is by~\eqref{160901.3-1}, 
	and the third is induced by Hom-tensor adjointness and tensor cancellation.
	\begin{align*}
		\rhom_{R}(W,Y) 
			&\simeq \rhom_{\overline{R}}(W,\rhom_S(\overline{S},Y))\\
			&\simeq \rhom_{\widetilde{R}}(W,\rhom_{\overline{S}}(\widetilde{S}, \rhom_S(\overline{S},Y)))\\ 
			&\simeq \rhom_{\widetilde{R}}(W,\rhom_S(\widetilde{S},Y)) 
	\end{align*}
	This justifies the equality in the next display. 
	\[
		\inf(\rhom_{\widetilde{R}}(W,\rhom_{S}(\widetilde{S}, Y))) = \inf(\rhom_{R}(W,Y)) \geq \id_R(Y).
	\]
	The inequality is by~\cite[Theorem 2.4.I]{avramov:hdouc}.
	From this and another application \cite[Theorem 2.4.I]{avramov:hdouc} it follows that $\id_{\widetilde{R}}(\rhom_{S}(\widetilde{S},Y)) \leq \id_R(Y)$ as desired.
\end{proof}

The next result is proved like the previous one. It is not needed for the results of this paper; however, see Remark \ref{180814:1}.

\begin{lem}
	\label{180607:1}
	Let $R,S,\widetilde{R},\widetilde{S}$ be commutative noetherian rings (not necessarily local) and consider the following commutative diagram of ring homomorphisms
	\begin{center}
	\begin{tikzpicture}
		\matrix[matrix of math nodes,row sep=2em, column sep=2em, text height=1.5ex, text depth=0.25ex]
		{
			|[name=R]| R & |[name=R2]| \widetilde{R}\\
			|[name=S]| S & |[name=S2]| \widetilde{S}\\
		};
		\draw[->,font=\scriptsize]
			(R) edge (R2)
			(R) edge node[left]{$\tau$} (S)
			(S) edge (S2)
			(R2) edge node[right]{$\widetilde{\tau}$} (S2);
	\end{tikzpicture}
	\end{center}
	such that $S \cong \widetilde{R}\otimes_R \widetilde{S}$ and $\tor_{i}^R(S,\widetilde{R}) = 0$ for all $i>0$. Let $Y\in \D_{+}(S)$. Then 
        \[
            \fd_{\widetilde{R}}(\widetilde{S} \lotimes_S Y)\leq \fd_R(Y).
        \]
\end{lem}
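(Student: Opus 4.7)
The plan is to dualize the proof of Lemma~\ref{160901.3}, replacing $\rhom$ and injective resolutions by $\lotimes$ and flat resolutions throughout. Assume $\fd_R(Y)<\infty$ without loss of generality, and apply~\cite[Proposition 2.1.10]{avramov:ifr} to factor the top horizontal arrow $R\to\widetilde{R}$ as $R\xra{\iota}\overline{R}\xra{\ve,\,\simeq}\widetilde{R}$, where $\overline{R}$ is a commutative DG $R$-algebra with each $\overline{R}_i$ free over $R$ and $\overline{R}_{<0}=0$, and $\ve$ is a DG-algebra quasi-isomorphism.

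Set $\overline{S}=S\otimes_R\overline{R}$, which is a DG $S$-algebra with each $\overline{S}_i$ free over $S$ and concentrated in non-negative degrees. I would then verify that the induced DG-algebra morphism $\overline{S}\to\widetilde{S}$ is a quasi-isomorphism, by precisely the cone argument used in the proof of Lemma~\ref{160901.3}: the cone of $\ve$ is a bounded-below exact complex whose components are the free $R$-modules $\overline{R}_i$ together with $\widetilde{R}$, and the hypothesis $\tor^R_{\geq 1}(S,\widetilde{R})=0$ together with the flatness of the $\overline{R}_i$ ensures that $S\otimes_R\cone(\ve)$ remains exact; hence $\cone(\overline{S}\to\widetilde{S})\cong S\otimes_R\cone(\ve)$ is exact.

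Using $\overline{S}$ as a semiflat resolution of $\widetilde{S}$ over $S$, compute in the derived category
\[
\widetilde{S}\lotimes_S Y \simeq \overline{S}\otimes_S Y \cong \overline{R}\otimes_R Y \simeq \widetilde{R}\lotimes_R Y,
\]
where the middle identification is tensor cancellation against $\overline{S}=S\otimes_R\overline{R}$, and the final quasi-isomorphism uses that $\overline{R}$ is semifree, hence semiflat, over $R$. A standard base-change argument --- take a bounded flat resolution $F\xra{\simeq} Y$ of length $\fd_R(Y)$ over $R$, so that $\widetilde{R}\otimes_R F$ is a bounded flat resolution of $\widetilde{R}\lotimes_R Y$ over $\widetilde{R}$ of the same length --- yields $\fd_{\widetilde{R}}(\widetilde{R}\lotimes_R Y)\leq\fd_R(Y)$. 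Combined with the display above, this gives the desired inequality $\fd_{\widetilde{R}}(\widetilde{S}\lotimes_S Y)\leq\fd_R(Y)$.

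The main technical content is the DG-algebraic quasi-isomorphism $\overline{S}\simeq\widetilde{S}$; this is precisely the manipulation already carried out in the proof of Lemma~\ref{160901.3}, so one could equivalently invoke that proof verbatim. The remaining steps --- the derived tensor product calculation and the base-change inequality --- are essentially routine and present no real obstacle, which is why the proof can be much shorter than the $\rhom$-case: one does not need the Hom-evaluation detour via $\rhom_S(\overline{S},Y)\simeq\rhom_R(\overline{R},Y)$ and a semiinjective resolution, because semiflat resolutions of $\widetilde{S}$ and $\widetilde{R}$ transport the derived tensor product directly.
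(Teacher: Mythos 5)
The paper gives no explicit argument for this lemma, saying only that it ``is proved like the previous one'' (i.e.\ Lemma~\ref{160901.3}); your proof is a correct realization of that dualization, following the same DG-algebra factorization and the same cone argument. Your observation that the tensor case is genuinely shorter is accurate: the chain $\widetilde{S}\lotimes_S Y\simeq\overline{S}\otimes_S Y\cong\overline{R}\otimes_R Y\simeq\widetilde{R}\lotimes_R Y$ in $\D(\overline{R})$ (hence in $\D(\widetilde{R})$ by the equivalence along $\overline{R}\xra{\simeq}\widetilde{R}$) makes the semiinjective-resolution detour and the final test against $\widetilde{R}$-modules via~\cite[Theorem~2.4.I]{avramov:hdouc} unnecessary, replacing them with a one-line base change of a bounded flat resolution. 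One small remark: the statement of the lemma contains a typo ($S\cong\widetilde{R}\otimes_R\widetilde{S}$ should read $\widetilde{S}\cong S\otimes_R\widetilde{R}$, matching Lemma~\ref{160901.3}), and you have correctly read the intended hypothesis.
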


The following result is a slight improvement on~\cite[Theorem~3.13]{mathew}. 
Our proof is similar to that of \textit{op.\ cit.}, but we include it here for the sake of completeness.

\begin{prop}\label{prop171022a}
Let $R$ be a commutative noetherian ring (not necessarily local) with $d = \dim(R) < \infty$. 
Let $R'$ be a faithfully flat $R$-algebra.
Then $R$ is in the thick subcategory $T$ of $\catd(R)$ generated by $\operatorname{Add}(R')$.
\end{prop}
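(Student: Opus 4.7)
The plan is to realize $R$ as a bounded iterated extension of objects from $\operatorname{Add}(R')$ via Amitsur descent for the faithfully flat map $R\to R'$, using $\dim R<\infty$ to force the process to terminate. This is a variant of~\cite[Theorem~3.13]{mathew}.

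Consider the augmented Amitsur cochain complex
\[
R\to R'\to R'\otimes_R R'\to R'\otimes_R R'\otimes_R R'\to\cdots,
\]
whose coboundaries are alternating sums of the unit insertions. Faithful flatness of $R\to R'$ makes this complex exact. Set $C=\operatorname{cofib}(R\to R')$ in $\catd(R)$, giving an exact triangle $R\to R'\to C\to$. Since the multiplication $R'\otimes_R R'\to R'$ is a retraction of the right unit $R'\to R'\otimes_R R'$, tensoring the defining triangle of $C$ with $R'$ over $R$ and splitting off the retract yields $R'\lotimes_R R'\simeq R'\oplus (R'\lotimes_R C)$ in $\catd(R)$. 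Iterating this splitting argument, each mixed derived tensor product $R'\lotimes_R C\lotimes_R\cdots\lotimes_R C$, with $C$ appearing $k$ times, is a retract of an iterated tensor product of copies of $R'$, and hence lies in the thick subcategory $T$ of $\catd(R)$ generated by $\operatorname{Add}(R')$.

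The key claim is the $\otimes$-nilpotence of $C$: writing $C^{\otimes k}$ for the $k$-fold derived tensor product of $C$ with itself over $R$, we need $C^{\otimes N}\simeq 0$ in $\catd(R)$ for some finite $N$. Granting this, tensoring the triangle $R\to R'\to C\to$ with $C^{\otimes k}$ produces exact triangles
\[
C^{\otimes(k+1)}\to R'\lotimes_R C^{\otimes k}\to C^{\otimes k}\to.
\]
The base case $C^{\otimes N}\simeq 0\in T_1$ is trivial, and a descending induction on $k$ using $R'\lotimes_R C^{\otimes k}\in T$ together with the additivity of the $T_n$ filtration recorded in Remark~\ref{disc171022a} places $C^{\otimes k}\in T$ for every $k\geq 1$. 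Taking $k=1$ gives $C\in T$, and then the triangle $R\to R'\to C\to$ together with $R'\in T_1$ and the two-of-three property puts $R\in T$, as desired.

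The main obstacle is the $\otimes$-nilpotence of $C$, and this is exactly where the hypothesis $\dim R<\infty$ enters. By Fact~\ref{180505:1} every flat $R$-module has projective dimension at most $d$; applied to $R'$ and to its iterated tensor powers (which are all flat), this gives $\pd_R C\leq d$. The exactness of the augmented Amitsur complex expresses the iterated syzygies of $R$ in terms of the tensor powers of $R'$, and combining this with the uniform bound $\pd_R C\leq d$ forces $C^{\otimes N}\simeq 0$ in $\catd(R)$ for $N$ large enough (say, $N=d+2$) on Tor-theoretic grounds. This is the technical heart of the argument, carried out in the proof of~\cite[Theorem~3.13]{mathew}, to which we refer for the detailed bookkeeping.
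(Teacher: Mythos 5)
Your strategy follows the same broad descent-up-to-nilpotence template as the paper, but it hinges on a claim that is actually false, so there is a genuine gap.

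Set $C=\operatorname{cofib}(R\to R')$. Since $R\to R'$ is injective (being faithfully flat), $C$ is the flat $R$-module $M=R'/R$ sitting in degree $0$. You assert that $C^{\otimes N}\simeq 0$ in $\catd(R)$ for $N\gg 0$, and this is the ``key claim'' on which the rest of your induction rests. But for a non-zero flat module $M$, the derived tensor powers are just the ordinary tensor powers $M^{\otimes N}$, and these are \emph{never} zero when $M\neq 0$: choose $\p$ with $M\otimes_R\kappa(\p)\neq 0$ (such a $\p$ exists because a non-zero complex has non-empty small support), and then $M^{\otimes N}\otimes_R\kappa(\p)\cong(M\otimes_R\kappa(\p))^{\otimes_{\kappa(\p)}N}$ is a non-zero $\kappa(\p)$-vector space. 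Concretely, take $R=k$ a field and $R'=k\times k$; then $d=0$ and $C\simeq k$, so $C^{\otimes N}\simeq k\neq 0$ for all $N$. Likewise $\pd_R C\leq d$ gives you no vanishing of $\tor$-powers of a flat module. So the base case of your descending induction, $C^{\otimes N}\simeq 0\in T_1$, is simply not available, and with it the whole induction collapses.

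What is true — and what the paper's proof establishes — is the nilpotence of the \emph{morphism}, not the object. Writing $I=\shift^{-1}C$, so that there is a triangle $I\xra{\phi}R\to R'\to$, the flatness of $M$ gives $\pd_R(M^{\otimes(d+1)})\leq d$, hence
\[
\Hom_{\catd(R)}\bigl(I^{\otimes(d+1)},R\bigr)\cong\ext^{d+1}_R\bigl(M^{\otimes(d+1)},R\bigr)=0,
\]
so $\phi^{\otimes(d+1)}$ is nullhomotopic even though its source is non-zero. This forces $R$ to be a retract of $C(d+1)=\cone(\phi^{\otimes(d+1)})$. One then still has to verify, by a separate induction mixing the octahedral axiom with the two auxiliary families $(R')^{\otimes m}\otimes_R I^{\otimes n}$ and $I^{\otimes m}\otimes_R C(n)$, that $C(d+1)$ lies in $T$ (Claims 2 and 3 in the paper). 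Your outline skips that step as well, because if $C^{\otimes N}$ really were zero you would not need the cones $C(n)$ at all. The distinction between nilpotence of the object $C$ and nilpotence of the map $\phi$ is exactly the content of ``descendability'' in Mathew's framework, and replacing the former by the latter is what you need to do to repair the argument.
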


\begin{proof}
Claim 1: For all projective $R$-modules $P$ and all $n\geq 1$ the tensor product $P \otimes_R (R')^{\otimes n}$ is in $T$. 
In particular, for all $n\geq 1$, we have $(R')^{\otimes n} \in T$.

Proof of Claim 1. We argue by induction on $n$.
For the base case $n=1$, note that $P$ is a summand of $R^{(B)}$ for some set $B$. By definition
we have $R^{(B)} \otimes_R R' \cong (R')^{(B)}\in\operatorname{Add}(R')$. Thus the summand $P\otimes_R R'$ is also in $\operatorname{Add}(R') \subseteq T$.

Induction step:
Assume that $n\geq 1$ and that $P\otimes_R (R')^{\otimes n} \in T$ for all $P$.
Fact~\ref{180505:1} implies that $\pd_R(R')\leq d$. 
This provides a bounded projective resolution
\begin{equation}\label{eq171022a}
0\to P_{d}\to\cdots\to P_{0}\to R'\to 0
\end{equation}
where each projective $R$-module $P_{i}$ is a summand of a free $R$-module $R^{(B_i)}$ with basis $B_i$.
Apply $\Otimes[R] -{(P\otimes_R (R')^{\otimes n})}$ to the resolution~\eqref{eq171022a}.
As $(R')^{\otimes n}$ is flat, so is $P\otimes_R (R')^{\otimes n}$. This yields an exact sequence
\[
	0\to P_d \otimes_R P \otimes_R (R')^{\otimes n} \to\cdots\to P_0 \otimes_R P\otimes (R')^{\otimes n} \to P\otimes_R (R')^{\otimes (n+1)}\to 0.
\]
Our induction hypothesis implies $P_i \otimes_R P \otimes_R (R')^{\otimes n} \in T$ for all $i$. As $T$ is thick, the above exact sequence implies that $P\otimes_R (R')^{\otimes (n+1)}\in T$. 
This establishes Claim~1.

Set $M=R'/R$, which is flat over $R$ since $R'$ is faithfully flat.
Next set $I=\shift^{-1}M$ so there is a natural exact triangle
in $\catd(R)$
\begin{equation}\label{eq171022d}
I\xra{\phi} R\to R'\to.
\end{equation}

Claim 2: For all $m,n\geq 1$ we have $\Otimes[R]{(R')^{\otimes m}}{I^{\otimes n}}\in T$.
In particular, $R'\otimes_R I^{\otimes n} \in T$ for all $n\geq 1$.

Proof of Claim 2. We argue by induction on $n$.
For the base case $n=1$, 
apply the functor $\Otimes[R]{(R')^{\otimes m}}-$ to the triangle~\eqref{eq171022d} and use the flatness of $(R')^{\otimes m}$
to get the exact triangle
$$
\Otimes[R]{(R')^{\otimes m}}I\to (R')^{\otimes m}\to(R')^{\otimes (m+1)}\to.
$$
Since $(R')^{\otimes m}$ and $(R')^{\otimes (m+1)}$ are in $T$ by Claim 1, 
so is $\Otimes[R]{(R')^{\otimes m}}I$.

The induction step is similar to the base case.
Assume that $n\geq 1$ and that $\Otimes[R]{(R')^{\otimes m}}{I^{\otimes n}}\in T$ for all $m\geq 1$.
Apply $\Otimes[R]{(\Otimes[R]{(R')^{\otimes m}}{I^{\otimes n}})}-$ to the triangle~\eqref{eq171022d} and use the flatness of $(R')^{\otimes m} \otimes_R M^{\otimes n}$
to get the exact triangle
$$
\Otimes[R]{(R')^{\otimes m}}{I^{\otimes (n+1)}}\to \Otimes[R]{(R')^{\otimes m}}{I^{\otimes n}}\to\Otimes[R]{(R')^{\otimes (m+1)}}{I^{\otimes n}}\to.
$$
Since $\Otimes[R]{(R')^{\otimes m}}{I^{\otimes n}}$ and $\Otimes[R]{(R')^{\otimes (m+1)}}{I^{\otimes n}}$ are in $T$, 
so is $\Otimes[R]{(R')^{\otimes m}}{I^{\otimes (n+1)}}$.
This establishes Claim~2.

Recall the morphism $\phi$ from~\eqref{eq171022d}. For each $n\in \bbn$, consider the natural morphism $I^{\otimes n}\xra{\phi^{\otimes n}}R^{\otimes n}\simeq R$
and the induced exact triangle
\begin{equation}\label{eq171022e}
I^{\otimes n}\xra{\phi^{\otimes n}} R\to C(n)\to.
\end{equation}

Claim 3: For all $m \geq 0$ and
all $n \geq 1$ we have $\Otimes[R]{I^{\otimes m}}{C(n)}\in T$.
In particular, $C(n)\in T$ for all $n\geq 1$.

Proof of Claim 3. We argue by induction on $n$.
For the base case $n=1$, 
compare the triangles~\eqref{eq171022d} and~\eqref{eq171022e} to conclude that $\Otimes[R]{I^{\otimes 0}} C(1) \simeq C(1) \simeq R'\in T$.
For $m\geq 1$ it follows that $\Otimes[R]{I^{\otimes m}}{C(1)}\simeq \Otimes[R]{I^{\otimes m}}{R'}\in T$ by Claim~2.

Induction step:
Assume that $n\geq 1$ and that $\Otimes[R]{I^{\otimes m}}{C(n)}\in T$ for all $m\geq 0$.
The morphism $\phi^{\otimes (n+1)}$ decomposes as the composition of the next morphisms
$$I^{\otimes (n+1)}\xra{\phi^{\otimes n}\otimes I}\Otimes[R] RI\xra\cong I\xra{\phi}R.$$
Apply $\Otimes[R] -I$ to the triangle~\eqref{eq171022e} to produce the next exact triangle
$$
I^{\otimes (n+1)}\xra{\phi^{\otimes n}\otimes I} \Otimes[R] RI\to \Otimes[R]{C(n)}I\to.
$$
The Octahedral Axiom applied to the morphisms $\phi^{\otimes n}\otimes I$ and $\phi$ (and their composition $\phi^{\otimes (n+1)}$)
yields the next exact triangle.
$$\Otimes[R]{C(n)}I
\to C(n+1)
\to C(1)
\to 
$$
Apply $\Otimes[R] -{I^{\otimes m}}$ to this triangle to obtain the next one.
$$\Otimes[R]{C(n)}{I^{\otimes (m+1)}}
\to \Otimes[R]{C(n+1)}{I^{\otimes m}}
\to \Otimes[R]{C(1)}{I^{\otimes m}}
\to 
$$
Since $\Otimes[R]{C(n)}{I^{\otimes (m+1)}},\Otimes[R]{C(1)}{I^{\otimes m}}\in T$, we have $\Otimes[R]{C(n+1)}{I^{\otimes m}}\in T$.
This establishes Claim~3.

Now we complete the proof. 
The module $M=R'/R$ is flat, hence so is $M^{\otimes (d+1)}$.
Thus, we have $\pd_R(M^{\otimes (d+1)})\leq d$ and so $\Ext{d+1}{M^{\otimes (d+1)}}R=0$.
It follows that
$$\Ext 0{I^{\otimes (d+1)}}R\cong \Ext 0{\shift^{-d-1}M^{\otimes (d+1)}}R\cong\Ext{d+1}{M^{\otimes (d+1)}}R=0.$$
It follows that the homotopy class of the
morphism $\phi^{\otimes (d+1)}$ is in $\Ext 0{I^{\otimes (d+1)}}R=0$, thus $\phi^{\otimes (d+1)}$ is nullhomotopic.
It follows that the codomain $R$ is a retract of $C(d+1)$.
Claim 3 implies that $C(d+1)$ is in $T$, which is closed under retracts. Therefore we have $R\in T$, as desired.
\end{proof}

Our point for including Proposition~\ref{prop171022a} is to obtain the next two results for use in Remark~\ref{180817:1}.

\begin{prop}\label{prop171022b}
Continue with the assumptions of Proposition~\ref{prop171022a}
and let $X\in\catd(R)$ be such that $\Rhom{R'}X\in\catd_*(R)$, where
$*\in\{+,-,b\}$.
Then for all $Z\in T$ we have $\Rhom{Z}X\in\catd_*(R)$.
\end{prop}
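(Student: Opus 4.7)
The plan is to proceed by induction on $n$, where $Z$ lies in the $n$th layer $T_n$ of the thick subcategory $T = \bigcup_n T_n$ from Remark~\ref{disc171022a}.

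\textbf{Base case $n=1$:} Here $Z \in \operatorname{Add}(R')$, so $Z$ is a retract of $(R')^{(\Lambda)}$ for some set $\Lambda$. Hom out of a coproduct becomes a product, so
\[
\Rhom{(R')^{(\Lambda)}}{X} \simeq \Rhom{R'}{X}^{\Lambda}.
\]
Since products of complexes of $R$-modules are exact, homology commutes with products; thus if $\Rhom{R'}{X}\in\catd_*(R)$, then so is the product $\Rhom{R'}{X}^\Lambda$, and hence so is its retract $\Rhom{Z}{X}$ (each of $\catd_+$, $\catd_-$, $\catd_b$ is closed under summands).

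\textbf{Inductive step:} Suppose the conclusion holds for all members of $T_{n-1}$ (and $T_1$). Let $Z \in T_n$, so $Z$ is a retract of some $Y\in\catd(R)$ fitting in an exact triangle $Y'\to Y\to Y''\to$ with $Y'\in T_1$ and $Y''\in T_{n-1}$. Applying the contravariant triangulated functor $\Rhom{-}{X}$ yields an exact triangle
\[
\Rhom{Y''}{X}\to \Rhom{Y}{X}\to \Rhom{Y'}{X}\to .
\]
By the base case $\Rhom{Y'}{X}\in\catd_*(R)$, and by induction $\Rhom{Y''}{X}\in\catd_*(R)$. The subcategory $\catd_*(R)$ satisfies the two-of-three condition (via the long exact sequence in homology), so $\Rhom{Y}{X}\in\catd_*(R)$. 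As $Z$ is a retract of $Y$, the complex $\Rhom{Z}{X}$ is a retract of $\Rhom{Y}{X}$, and so lies in $\catd_*(R)$ as well.

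\textbf{Main obstacle:} The only nontrivial point is verifying the base case, namely that the product $\Rhom{R'}{X}^{\Lambda}$ remains in $\catd_*(R)$ for every (possibly infinite) index set $\Lambda$; this hinges on the exactness of products of chain complexes, which forces $\HH_i\bigl(\Rhom{R'}{X}^\Lambda\bigr) \cong \HH_i(\Rhom{R'}{X})^\Lambda$, preserving vanishing outside the prescribed range for each of $* \in \{+,-,b\}$. The induction machinery is then purely formal from the structure of the layers $T_n$.
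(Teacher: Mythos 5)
Your argument is correct and matches the paper's proof essentially verbatim: induction on the layer $n$ with $Z\in T_n$, base case via the coproduct-to-product isomorphism $\Rhom{(R')^{(\Lambda)}}{X}\simeq\Rhom{R'}{X}^{\Lambda}$ together with exactness of products and closure of $\catd_*(R)$ under retracts, and the inductive step via the long exact sequence from the defining triangle. Nothing to add.
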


\begin{proof}
By Remark~\ref{disc171022a} we have $Z\in T_n$ for some $n\geq 1$. 
Argue by induction on~$n$.

Base case: $n=1$. In this case, $Z$ is a summand of $(R')^{(A)}$ for some $A$.
The condition $\Rhom{R'}X\in\catd_*(R)$ implies that
$$\Rhom{(R')^{(A)}}X\simeq\Rhom{R'}X)^{A}\in\catd_*(R).$$
It follows that the summand $\Rhom{Z}X$ of $\Rhom{(R')^{(A)}}X$ is also in $\catd_*(R)$.

Inductive step. Assume that $n\geq 1$ and  for all $Z'\in T_n$ we have $\Rhom{Z'}X\in\catd_*(R)$.
Let $Z\in T_{n+1}$. 
Then $Z$ is a retract of an object $Y\in\catd(R)$ such that there is an exact triangle 
$Y'\to Y\to Y''\to$
in $\catd(R)$ with $Y'\in T_1$ and $Y''\in T_{n}$.
Our base case and induction hypothesis imply that
$\Rhom{Y'}X,\Rhom{Y''}X\in\catd_*(R)$.
A long exact sequence argument shows that
$\Rhom{Y}X\in\catd_*(R)$.
It follows that the retract
$\Rhom{Z}X$ must also be in $\catd_*(R)$.
\end{proof}

\begin{cor}\label{cor171022a}
Continue with the assumptions of Proposition~\ref{prop171022a}.
Let $X\in\catd(R)$ be such that $\Rhom{R'}X\in\catd_*(R)$, where
$*\in\{+,-,b\}$.
Then $X\in \D_{*}(R)$.
\end{cor}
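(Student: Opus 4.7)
The plan is to deduce this corollary directly from the two preceding results by specializing to $Z = R$. By Proposition~\ref{prop171022a}, the hypothesis that $R$ has finite Krull dimension and that $R'$ is faithfully flat over $R$ guarantees that $R$ itself lies in the thick subcategory $T \subseteq \catd(R)$ generated by $\operatorname{Add}(R')$. This is the crucial structural input.

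With that fact in hand, I would apply Proposition~\ref{prop171022b} to the object $Z = R \in T$. Since $\Rhom{R'}{X} \in \catd_*(R)$ by hypothesis, the proposition yields $\Rhom{R}{X} \in \catd_*(R)$. Finally, the standard natural isomorphism $\Rhom{R}{X} \simeq X$ in $\catd(R)$ identifies this with $X$, giving $X \in \catd_*(R)$ as required.

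The main obstacle is, in fact, absent at this stage: all the real work has already been absorbed into the two previous results. Proposition~\ref{prop171022a} is where the delicate thick-subcategory generation argument (using the Raynaud--Gruson--Jensen bound on $\pd_R(R')$ together with the iterated cone construction from the triangle $I \to R \to R' \to$) takes place, and Proposition~\ref{prop171022b} carries out the induction on the filtration $T = \bigcup_n T_n$ from Remark~\ref{disc171022a} to propagate the boundedness property of $\Rhom{-}{X}$ from $R'$ to every object of $T$. The corollary is simply the evaluation of that propagation at the distinguished object $R \in T$.
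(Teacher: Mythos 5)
Your proposal is correct and follows exactly the paper's own argument: invoke Proposition~\ref{prop171022a} to place $R$ in the thick subcategory $T$, then apply Proposition~\ref{prop171022b} with $Z=R$ and use $\Rhom{R}{X}\simeq X$ to conclude. No differences to note.
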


\begin{proof}
Proposition~\ref{prop171022a} implies that
$R\in T$, so
we have
$X\simeq\Rhom RX\in\catd_*(R)$
by Proposition~\ref{prop171022b}.
\end{proof}

\subsection*{Acknowledgments} 
We are grateful to Lars W. Christensen, Mohsen Gheibi, and Srikanth B. Iyengar for helpful suggestions.

\providecommand{\bysame}{\leavevmode\hbox to3em{\hrulefill}\thinspace}
\providecommand{\MR}{\relax\ifhmode\unskip\space\fi MR }
\providecommand{\MRhref}[2]{%
  \href{http://www.ams.org/mathscinet-getitem?mr=#1}{#2}
}
\providecommand{\href}[2]{#2}

\end{document}